\def\SCM{{\underline{\mathrm{CM}}}}
\def\Rs{R^{\sharp }}
\def\Rss{R^{\sharp \sharp }}
\def\End{\mathrm{End}}
\def\Ker{\operatorname{Ker}}
\def\Im{\operatorname{Im}}
\def\tr{\mathrm{tr}}
\def\depth{\mathrm{depth}}
\def\m{\mathfrak m}
\def\p{\mathfrak p}
\def\q{\mathfrak q}
\def\r{\mathfrak r}
\def\Fitt{\mathcal F} 
\newtheorem{theorem}{Theorem}[section]
\newtheorem{lemma}[theorem]{Lemma}
\newtheorem{corollary}[theorem]{Corollary}
\newtheorem{proposition}[theorem]{Proposition}
\theoremstyle{definition}
\newtheorem{definition}[theorem]{Definition}
\newtheorem{example}[theorem]{Example}
\newtheorem{remark}[theorem]{Remark}
\newtheorem*{claim}{Claim}
\numberwithin{equation}{section}
\begin{document}
\allowdisplaybreaks
\title[Degenerations of modules over commutative rings]{Degenerations over $(A_\infty)$-singularities and construction of degenerations over commutative rings}
\author{Naoya Hiramatsu} 
\address{Department of general education, National Institute of Technology, Kure College, 2-2-11, Agaminami, Kure Hiroshima, 737-8506 Japan}
\email{hiramatsu@kure-nct.ac.jp}
\author{Ryo Takahashi}
\address{Graduate School of Mathematics, Nagoya University, Furocho, Chikusaku, Nagoya, Aichi 464-8602, Japan}
\email{takahashi@math.nagoya-u.ac.jp}
\urladdr{http://www.math.nagoya-u.ac.jp/~takahashi/}
\author{Yuji Yoshino}
\address{Department of Mathematics, Graduate School of Natural Science and Technology, Okayama University, Okayama 700-8530, Japan}
\email{yoshino@math.okayama-u.ac.jp}
\thanks{2010 {\em Mathematics Subject Classification.} 13C14, 14D06, 16G60}
\date{\today}
\keywords{degeneration of modules, (maximal) Cohen-Macaulay module, hypersurface singularity of type $(A_\infty)$, countable Cohen-Macaulay representation type, Kn\"{o}rrer's periodicity}
\thanks{NH was supported by JSPS Grant-in-Aid for Young Scientists 15K17527.
RT was supported by JSPS Grant-in-Aid for Scientific Research 16K05098.
YY was supported by JSPS Grant-in-Aid for Scientific Research 26287008.}
\begin{abstract}
We give a necessary condition of degeneration via matrix representations, and consider degenerations of indecomposable Cohen-Macaulay modules over hypersurface singularities of type ($A_\infty$).
We also provide a method to construct degenerations of finitely generated modules over commutative rings.
\end{abstract}
\maketitle
\section{Introduction}

The notion of degenerations of modules has been a central subject in the representation theory of algebras.
Let $k$ be a field, and let $R$ be a $k$-algebra.
Let $M,N$ be $R$-modules with the same finite dimension as $k$-vector spaces.
We say that {\em $M$ degenerates to $N$} if in the module variety the point corresponding to $M$ belongs to the Zariski closure of the orbit of the point corresponding to $N$.
This definition is only available for modules of finite length.
The third author \cite{Y04} thus introduces a scheme-theoretic definition of degenerations for arbitrary finitely generated modules, and extends the characterization of degenerations due to Riedtmann \cite{R86} and Zwara \cite{Z00}.
Interest of this subject is to describe relations of degenerations of modules \cite{B96, R86, Y02, Y04, Z99, Z00}.
In \cite{HY13}, the first and third authors give a complete description of degenerations over a ring of even-dimensional simple hypersurface singularity of type ($A_n$).

The first purpose of this paper is to give a necessary condition for the degenerations of maximal Cohen-Macaulay modules by considering it via matrix representations over the base regular local ring. 
As an application, we give the description of degenerations of indecomposable Cohen-Macaulay modules over hypersurface singularities of type ($A_\infty$).

\begin{theorem}\label{a}
Let $k$ be an algebraically closed field.
\begin{enumerate}[\rm(1)]
\item
Let $R=k[\![x,y]\!]/(x^2)$ be a hypersurface of dimension one.
Then $(x, y^a)R$ degenerates to $(x, y^b)R$ if and only if $a \leq b$ and $a \equiv b \mod 2$.
\item
Let $R=k[\![x,y,z]\!]/(xy)$ be a hypersurface of dimension two.
Then for all $a<b$, $(x, z^a)R$ and $(y, z^a)R$ do not degenerate to $(x, z^b)R$ and $(y, z^b)R$, respectively.
\end{enumerate}
\end{theorem}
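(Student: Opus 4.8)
\emph{Setup.} The plan is to pass to matrix factorizations and then to the linear-algebra model on which $x$ acts, and to apply the necessary condition for degeneration recorded earlier in the paper. For $R=k[\![x,y]\!]/(x^2)$ put $T=k[\![y]\!]$, so $R=T\oplus Tx$ is $T$-free and $x^2=0$. An MCM $R$-module is the same datum as a pair $(V,X)$ with $V$ finite free over $T$ and $X\in\End_T(V)$ satisfying $X^2=0$ (here $x$ acts by $X$), isomorphisms being $\mathrm{GL}(V)$-conjugations. A direct computation gives $(x,y^a)R\cong(T^2,X_a)$ with $X_a=\begin{pmatrix}0&y^a\\0&0\end{pmatrix}$ and $\ell_T(\Ker X_a/\Im X_a)=a$, so these are pairwise non-isomorphic. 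By Yoshino's definition a degeneration $(x,y^a)\to(x,y^b)$ (general fibre $(x,y^a)$, special fibre $(x,y^b)$) amounts, after adding free summands, to a family $X(t)\in M_N(T[t])$ with $X(t)^2=0$ of the form $X(t)=g(t)X_a g(t)^{-1}$ for some $g\in\mathrm{GL}_N(T[t,t^{-1}])$ with $X(0)\cong X_b$; the necessary condition of the earlier section is precisely that such a $g$ must exist, and its determinant enters the change of elementary divisor quadratically.

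\emph{Part (1).} The only-if part has two layers. The containment of Fitting ideals furnished by the necessary condition gives $(x,y^b)=\mathrm{Fitt}_1((x,y^b))\subseteq\mathrm{Fitt}_1((x,y^a))=(x,y^a)$, hence $a\le b$. The congruence $a\equiv b\ (\mathrm{mod}\ 2)$ is the main obstacle: writing $X(t)=gX_ag^{-1}$ and using that conjugation multiplies the off-diagonal block by a $2\times2$ minor of $g$ twice (the pattern $\tfrac1{\det g}\begin{pmatrix}-pr&p^2\\-r^2&pr\end{pmatrix}$ with $p=g_{11},\,r=g_{21}$ in the unstabilised case), the elementary divisor at $t=0$ satisfies $b-a=2\,v_y\!\big(\gcd(p,r)|_{t=0}\big)-v_y(\det g|_{t=0})$, and one shows that $v_y(\det g|_{t=0})$ is forced to be even, so $b-a$ is even. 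The if part is then constructive: it suffices to realise the single step $(x,y^a)\to(x,y^{a+2})$ and invoke transitivity of degeneration. Taking $g(t)=\begin{pmatrix}y&-t^{-1}\\ t&0\end{pmatrix}\in\mathrm{SL}_2(T[t,t^{-1}])$ yields $X(t)=gX_ag^{-1}=y^a\begin{pmatrix}-yt&y^2\\-t^2&yt\end{pmatrix}\in M_2(T[t])$, with $X(t)\cong X_a$ for $t\neq0$ and $X(0)=\begin{pmatrix}0&y^{a+2}\\ 0&0\end{pmatrix}=X_{a+2}$, which is the required family.

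\emph{Part (2).} The same model applies, but the base becomes rigid. Set $s=x+y$ and $B=k[\![s,z]\!]$; then $R=k[\![x,y,z]\!]/(xy)=B[x]/(x^2-sx)$ is $B$-free of rank two, so MCM modules are pairs $(V,X)$ over the two-dimensional regular ring $B$ with $X^2=sX$, and $(x,z^a)R\cong(B^2,X_a)$ with $X_a=\begin{pmatrix}s&z^a\\ 0&0\end{pmatrix}$, whose cokernel has torsion submodule $B/(s,z^a)$ of length $a$. Note $s$ is a nonzerodivisor on $R$ (it lies in neither minimal prime $(x),(y)$) and $R/(s)\cong k[\![x,z]\!]/(x^2)$ is exactly the curve of part (1). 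A degeneration $(x,z^a)\to(x,z^b)$ would give $X(t)=g(t)X_a g(t)^{-1}$ with $g\in\mathrm{GL}_N(B[t,t^{-1}])$, $X(t)\in M_N(B[t])$ and $X(0)\cong X_b$. The plan is to show that the induced family $\mathrm{tors}\,\Coker X(t)$ of finite-length $B$-modules is flat over $k[t]$, so that its fibrewise $k$-dimension is constant; as the general fibre is $B/(s,z^a)$ of length $a$ and the special fibre is $B/(s,z^b)$ of length $b$, this forces $a=b$, contradicting $a<b$. Flatness is exactly where the nonzerodivisor $s$ enters, and it is also visible in the failure of the curve construction: the analogous $g(t)=\begin{pmatrix}z&-t^{-1}\\ t&0\end{pmatrix}$ now produces the entry $szt^{-1}+z^{a+2}$, whose pole in $t$ cannot be cancelled because of the diagonal $s$.

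\emph{Main obstacle.} In both parts the hard step is the passage from the abstract necessary condition to the quantitative invariant: the even parity of $b-a$ in (1), and the flatness (hence constancy of length) of the torsion cokernel in (2). The crux in (1) is controlling the $t$-valuation of $\det g(t)$ and showing the elementary-divisor shift is even; the crux in (2) is exploiting the regularity of $s=x+y$ over the two-dimensional base $B$ to forbid any shift at all, the parity of (1) degenerating to the total obstruction $a=b$.
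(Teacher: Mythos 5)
Your treatment of part (1) follows the same route as the paper for everything except the parity obstruction: identifying $(x,y^a)$ with the square-zero matrix $X_a$, using Fitting ideals for $a\le b$, and the family $X(t)=y^a\left(\begin{smallmatrix}-yt&y^2\\-t^2&yt\end{smallmatrix}\right)$ (the paper's $\xi$ for $b=a+2$, combined with transitivity) are all fine. The gap is the claim that $b-a=2\,v_y\bigl(\gcd(p,r)|_{t=0}\bigr)-v_y\bigl(\det g|_{t=0}\bigr)$ with $v_y(\det g|_{t=0})$ even. The intertwining matrix $g$ has entries in $S\otimes_kV_t=T'{}^{-1}S[t]$, so $p$, $r$ and $\det g$ may vanish or have poles along $t=0$; the expressions $\gcd(p,r)|_{t=0}$ and $\det g|_{t=0}$ are not well defined until one has controlled the $t$-adic valuations, which is exactly the point at issue. (Moreover $\det g$ is a unit of $T'{}^{-1}S[t]$, hence of the form $u(y)q(t)$ with $u\in S^{\times}$, so wherever it is defined its $y$-valuation is $0$ and your formula would make the parity vacuous, which it is not.) The paper replaces this by a unique-factorization argument in the UFD $S\otimes_kV$: from $\tr\xi=0$, $\det\xi=0$ and the two-sided comparison of $I_1(\xi)$ with $(y^a)$ and $(t^l)$ one obtains $t\alpha'^2=\gamma'(y^{b-a}+t\beta')$, and a prime occurring with odd multiplicity in $y^{b-a}+t\beta'$ would have to divide $I_1(\xi)\supseteq(t^l)$, hence be $t$, which is impossible. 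An argument of that precision is needed to close part (1).

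Part (2) is where the proposal genuinely breaks down. You propose to show that $\operatorname{tors}\Coker X(t)$ is a flat family over $k[t]$ and conclude that its fibrewise length is constant, forcing $a=b$. But constancy of the torsion length of the cokernel under degeneration is precisely what must be proved, and it is false as a general principle: part (1) itself is a counterexample, since the degeneration $(x,y^a)\rightsquigarrow(x,y^{a+2})$ raises the length of $\operatorname{tors}\Coker X_a\cong T/(y^a)$ from $a$ to $a+2$. Hence the entire burden of part (2) is to isolate what is different in dimension two, and the proposal only gestures at ``the nonzerodivisor $s$'' without an argument; no reason is given why the torsion subfamily (formation of torsion is only left exact, and it is exactly where lengths jump under specialization) should be $k[t]$-flat here. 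The paper's Theorem \ref{even dim} supplies this missing content by direct computation: after the coordinate change to $x^2-yx$, the constraints $\tr\xi=y$, $\det\xi=0$ and $I_1(\xi)\supseteq t^l(z^a,y)$ feed into a divisibility analysis in $S\otimes_kV/(y)$, iterated $a$ times, ending in the impossible inclusion $(\overline{z^{b-a}+tP'''})\supseteq(\overline{t^l})$. Nothing in the proposal substitutes for that step.
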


\noindent
The proof of this theorem requires the fact that the Bass-Quillen conjecture holds for a formal power series ring over a field (\cite[Chapter V Theorem 5.1]{Lam}), which is the only place where the assumption that $R$ is complete and equicharacteristic is necessary.
Our proof also requires the base field to be algebraically closed.
Note that a complete equicharacteristic local hypersurface $R$ has countable Cohen-Macaulay representation type if and only if $R$ has either of the two forms in Theorem \ref{a}.

The second purpose of this paper is to provide a method to construct degenerations.
We obtain the following theorem.

\begin{theorem}\label{b}
Let $R$ be a commutative noetherian algebra over a field, and let $L$ be a finitely generated $R$-module.
Let $\alpha$ be an endomorphism of $L$ with $\Im\alpha=\Ker\alpha$, and let $x\in R$ be an $L$-regular element.
Then the following statements hold.
\begin{enumerate}[\rm(1)]
\item
Any submodule $M$ of $L$ containing $\alpha(L)+xL$ degenerates to $N:=\alpha(M)+x^2L$.
\item
If $L,M$ are Cohen-Macaulay, then so is $N$.
\end{enumerate}
\end{theorem}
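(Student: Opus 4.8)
The plan is to verify the degeneration through the scheme-theoretic criterion of \cite{Y04}: $M$ degenerates to $N$ exactly when there is a finitely generated $R[t]$-module $Q$ that is flat over $k[t]$ and satisfies $Q/(t-1)Q\cong M$ and $Q/tQ\cong N$. Note first that $\Im\alpha=\Ker\alpha$ forces $\alpha^2=0$, so $\alpha$ is nilpotent and $1+\alpha$ is an automorphism of $L$; this is what will pin down the generic fibre. I would build $Q$ inside $L\otimes_k k[t]=L[t]$ as the $R[t]$-submodule interpolating the generators of $M$ and of $N=\alpha(M)+x^2L$: namely $Q$ generated over $R[t]$ by $x^2L$ together with the $t$-twisted elements $tm+\alpha(m)$ for $m\in M$ (with the multiplication-by-$x$ part twisted by the unit factor $t+x$ if needed so that both fibres come out on the nose). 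Since $R[t]$ is noetherian, $Q$ is finitely generated, and being a submodule of the $k[t]$-free module $L[t]$ it is $k[t]$-torsion-free, hence flat. Evaluating the generators at $t=1$ and using that $1+\alpha$ is invertible gives $\operatorname{ev}_1(Q)=x^2L+(1+\alpha)M=M$, while evaluating at $t=0$ gives $\operatorname{ev}_0(Q)=\alpha(M)+x^2L=N$, so at the level of fibres the family already links $M$ and $N$.

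The main obstacle is to promote these evaluation images to genuine isomorphisms of the quotients $Q/(t-1)Q$ and $Q/tQ$, i.e.\ to control the torsion of $L[t]/Q$. The generic side $Q/(t-1)Q\cong M$ is routine, but identifying the special fibre is delicate. If $tg\in Q$, then dividing by $t$ leaves a constant term $b\in M$ with $\alpha(b)\in x^2L$; writing $\alpha(b)=x^2c$ and applying $\alpha$ gives $x^2\alpha(c)=0$, whence $\alpha(c)=0$ by the $L$-regularity of $x$, so $c\in\Ker\alpha=\Im\alpha$ and therefore $b\in\Ker\alpha+x^2L$. Thus the hypothesis $\Im\alpha=\Ker\alpha$ together with $x$-regularity is precisely what makes this division possible, but it only places $b$ in $\Ker\alpha+x^2L$, a module containing $N$. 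Consequently the unrefined interpolation degenerates $M$ to $\Ker\alpha+x^2L$ rather than to $N$, and the technical heart of the proof is to refine the $t$-twist so that the surplus $(\Ker\alpha+x^2L)/N$ is absorbed and the special fibre is exactly $N$. I expect this to be the one genuinely delicate point; once $Q/tQ\cong N$ is secured, the criterion of \cite{Y04} yields (1).

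For (2) I would argue directly that $\depth N=\dim R$, independently of (1). Put $K=\Im\alpha=\Ker\alpha$ and $d=\dim R$. Since $\alpha$ induces an isomorphism $L/K\cong K$, the sequence $0\to K\to L\to L/K\to 0$ is a self-extension, and the depth lemma forces $K$ to be (maximal) Cohen--Macaulay whenever $L$ is. As $x$ is $L$-regular, hence regular on $K$ and on $L/K\cong K$, the module $L/(K+x^2L)=(L/K)/x^2(L/K)$ is Cohen--Macaulay of dimension $d-1$. Writing $W=K+x^2L$, one checks an isomorphism $W/N\cong L/M$ and an embedding $M/W\hookrightarrow L/(K+x^2L)$; combined with $\depth(L/M)\ge d-1$ (from $0\to M\to L\to L/M\to 0$) the depth lemma gives $\depth(W/N)\ge d-1$ and $\depth(M/W)\ge d-1$, whence $\depth(M/N)\ge d-1$ via the filtration $0\to W/N\to M/N\to M/W\to 0$. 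Finally $0\to N\to M\to M/N\to 0$ gives $\depth N\ge\min(\depth M,\depth(M/N)+1)=d$; as $N\supseteq x^2L$ has dimension $d$, we conclude $N$ is Cohen--Macaulay. The point needing care here is the identification $W/N\cong L/M$ and the embedding of $M/W$, which is again where $\Im\alpha=\Ker\alpha$ and the regularity of $x$ are used essentially.
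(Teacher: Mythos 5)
Your proof of part (1) is incomplete at exactly the point you flag as ``the technical heart''. You build $Q\subseteq L[t]$ as the $R[t]$-submodule generated by $x^2L$ and the elements $tm+\alpha(m)$ ($m\in M$), and you correctly check that the evaluation \emph{images} at $t=1$ and $t=0$ are $M$ and $N$. But the degeneration criterion requires the isomorphism $Q/tQ\cong N$, i.e.\ $Q\cap tL[t]=tQ$, and for your $Q$ this genuinely fails rather than being a technicality. Take $R=k[\![x,y]\!]/(x^2)$, $L=R$, $\alpha=x\cdot{}$, regular element $y$, $M=L$; then $N=(x,y^2)$ and your module is $Q=(y^2,\,t+x)R[t]$. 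Here $xt=x(t+x)\in Q\cap tR[t]$, but $xt\notin tQ$: cancelling $t$ would force $x\in Q$, and applying the substitution $t\mapsto -x$ shows $x\in Q$ would imply $x\in y^2R$, which is false. Hence $Q/tQ$ surjects onto $N$ with nonzero kernel and cannot be isomorphic to $N$ (a finitely generated module over a noetherian ring admits no surjective endomorphism with nonzero kernel). You acknowledge that the construction must be ``refined'' so that the special fibre is exactly $N$, but you never carry out the refinement, so the central claim of (1) is not proved. The paper sidesteps this entirely: instead of exhibiting $Q$, it sets $Z=\alpha(L)+xL$ and produces (Theorem \ref{1}(1), via the exactness statements of Lemma \ref{7}) a short exact sequence $0\to Z\xrightarrow{\left(\begin{smallmatrix}\theta\\ \eta\end{smallmatrix}\right)}M\oplus Z\xrightarrow{\left(\begin{smallmatrix}\alpha&-x\end{smallmatrix}\right)}N\to0$ in which $\eta$ satisfies $x\eta=\alpha|_Z$, hence is nilpotent because $\alpha^2=0$ and $x$ is $L$-regular; the Riedtmann--Zwara--Yoshino characterization (Remark \ref{remark of degenerations 1}(1)) then gives the degeneration. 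Any repair of your approach would essentially have to reconstruct this sequence (or enlarge $Q$ accordingly), which is precisely the missing content.

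Part (2), by contrast, is correct and takes a route different from the paper's. You first deduce that $K=\Im\alpha$ is maximal Cohen--Macaulay from the self-extension $0\to K\to L\to L/K\to0$ with $L/K\cong K$, and then chase depths along $N\subseteq W=K+x^2L\subseteq M\subseteq L$. The identification $W/N\cong L/M$ that you flag does hold: if $\alpha(l)=\alpha(m)+x^2l'$ then $x^2\alpha(l')=0$, so $l'\in\Ker\alpha=\alpha(L)$ and $x^2l'\in\alpha(x^2L)\subseteq\alpha(M)$, giving $\alpha(L)\cap(\alpha(M)+x^2L)=\alpha(M)$ and hence $W/N\cong\alpha(L)/\alpha(M)\cong L/M$. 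The paper instead compares the two four-term exact sequences of Lemma \ref{7} by the snake lemma to obtain $0\to L/M\oplus L/Z\to L/N\to L/W\to0$ and reads off $\depth L/N\ge d-1$. Both arguments are depth-lemma chases and yours is a legitimate alternative for (2); it does not, however, compensate for the gap in (1).
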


\noindent
This theorem is actually a corollary of our main result in this direction, which holds over arbitrary commutative rings and is proved by elementary calculations of matrices; we do not use matrix representations.
We also give various applications, one of which gives an alternative proof of the `if' part of Theorem \ref{a}(1).

The organization of this paper is as follows.
In Section \ref{matrix representation}, we review the notion of degenerations of Cohen-Macaulay modules by means of matrix representations, and give a necessary condition for degenerations (Corollary \ref{fitting}).
Using this, in Section \ref{ainf}, we describe degenerations over hypersurface singularities of type ($A_{\infty} $) with dimension at most two (Theorems \ref{odd dim} and \ref{even dim}).
In Section \ref{construction}, we give several ways to construct degenerations (Theorem \ref{1}), and provide applications.
In Section \ref{Knorrer} we attempt to extend Theorem \ref{a} to higher dimension (Example \ref{14}).

\section{Matrix representation}\label{matrix representation} 

We begin with recalling the definition of degenerations of finitely generated modules. For details, we refer the reader to \cite{Y02, Y04, Y11}.  

\begin{definition}\label{degeneration}
Let $R$ be a noetherian algebra over a field $k$.
Let $M,N$ be finitely generated left $R$-modules. 
We say that {\em $M$ degenerates to $N$} (or {\em $N$ is a degeneration of $M$}) if there exists a discrete valuation $k$-algebra $(V, tV, k)$ and a finitely generated left $R\otimes _{k} V$-module $Q$ such that $Q$ is flat as a $V$-module, $Q/tQ \cong N$ as an $R$-module, and $Q_t\cong M\otimes _{k} V_t$ as an $R\otimes _{k} V_t$-module.
When this is the case, we say that {\em $M$ degenerates to $N$ along $V$}.
\end{definition}

\begin{remark}\label{remark of degenerations 1}
Let $R$ be a noetherian algebra over a field $k$ and let $M$, $N$ and $L$ be finitely generated left $R$-modules. \\
(1) $M$ degenerates to $N$ if and only if there exists a short exact sequence of finitely generated left $R$-modules 
$$
\begin{CD}
0 @>>> Z @>{\left ( \begin{smallmatrix}h \\ g \end{smallmatrix}\right)}>> M \oplus Z @>>> N @>>> 0, 
\end{CD}
$$ 
where the endomorphism $g$ of $Z$ is nilpotent. 
See \cite[Theorem 1]{Z00} and \cite[Theorem 2.2]{Y04}. 
Note that if $M$ and $N$ is Cohen-Macaulay, then so is $Z$ (see \cite[Remark 4.3]{Y04}). \\
(2) Suppose that there is an exact sequence  
$0 \to L \to M \to N \to 0$.  
Then $M$ degenerates to $L \oplus N$. 
See \cite[Remark 2.5]{Y04} for the detail. \\
(3) Assume that $R$ is commutative and suppose that $M$ degenerates to $N$. 
Then the $i$th Fitting ideal of $M$ contains that of $N$ for all $i \geq 0$. 
Namely, denoting the $i$th Fitting ideal of an $R$-module $M$ by $\Fitt_{i}^{R} (M)$, we have $\Fitt_{i}^{R} (M) \supseteq \Fitt_{i}^{R} (N)$ for all $i \geqq 0$. 
See \cite[Theorem 2.5]{Y11}. 
\end{remark}

\begin{remark}\label{remark of degenerations 2}
We can always take the localization $k[t]_{(t)}$ of a polynomial ring as a discrete valuation ring $V$ as in Definition \ref{degeneration}. 
Moreover, let $T= k[t] \backslash (t)$ and $T'= k[t] \backslash \{0 \}$. 
Then we also have $R\otimes _k V = T^{-1} R[t]$ and $R\otimes _k V_t = T' {}^{-1} R[t]$. 
See \cite[Corollary 2.4]{Y04}. 
\end{remark}

Now let us recall the definition of a matrix representation.

\begin{definition}
Let $R$ be a commutative noetherian local ring.
Let $S$ be a Noether normalization of $R$, that is, $S$ is a subring of $R$ which is a regular local ring such that $R$ is a finitely generated $S$-module.
Then $R$ is Cohen-Macaulay if and only if $R$ is $S$-free, and a finitely generated $R$-module is (maximal) Cohen-Macaulay\footnote{Throughout this paper, ``Cohen-Macaulay" means ``maximal Cohen-Macaulay".} if and only if $M$ is $S$-free.

Let $R,M$ be Cohen-Macaulay.
Then $M\cong S^n$ for some $n \geq 0$, and we have a $k$-algebra homomorphism 
$$
R \to \End _{S}(M) \cong  S^{n \times n}.
$$
This is called a {\em matrix representation} of $M$ over $S$. 
\end{definition}

\begin{example}\label{1-dim}
(1) Let $S$ be a regular local ring and $R = S[\![x]\!]/(f)$ where $f= x^n + a_1 x^{n-1} + \cdots + a_n$ with $a_i \in S$.  
Then $R$ is a Cohen-Macaulay ring with an $S$-basis $\{ 1, x, x^2, ..., x^{n-1} \}$ and each matrix representation of Cohen-Macaulay $R$-module $M$ comes from the action of $x$ on $M$. \\
(2) 
Let $R = k[\![x, y]\!]/(x^2 )$ with a field $k$ of characteristic not two. 
It is known that the isomorphism classes of indecomposable Cohen-Macaulay $R$-modules are the following:
$$
R, \quad R/ (x), \quad (x, y^n) \quad n \geq 1.
$$
Giving the matrix representations over $k[\![y]\!]$ of these modules is equivalent to giving the following square-zero matrices; see also \cite[(6.5)]{Y}.
$$
\left(\begin{smallmatrix}  0 & 1 \\ 0 & 0 \end{smallmatrix}\right), \quad(0), \quad \left(\begin{smallmatrix}  0 & y^{n} \\ 0 & 0 \end{smallmatrix}\right) \quad n \geq 1. 
$$
\end{example}

\begin{proposition}\label{free} 
Let $(R,\m,k)$ be a commutative noetherian complete equicharacteristic local ring.
Then one can take a formal power series ring $S$ over $k$ as a Noether normalization of $R$.
Assume that $k$ is algebraically closed.
Let $M,N$ be Cohen-Macaulay $R$-modules such that $M$ degenerates to $N$ along $V$.
Let $Q$ be a finitely generated $R\otimes _k V$-module which gives the degeneration. 
Then $Q$ is free as an $S \otimes _k V$-module. 
\end{proposition}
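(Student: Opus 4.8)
The plan is to show that $Q$ is free over the regular local ring $S\otimes_k V$ by verifying that it is both maximal Cohen-Macaulay and has finite projective dimension, so that the Auslander-Buchsbaum formula forces freeness.

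First I would establish that $S\otimes_k V$ is a regular local ring. Since $S$ is a formal power series ring over the algebraically closed field $k$ and $V=k[t]_{(t)}$ may be taken as the discrete valuation ring (Remark \ref{remark of degenerations 2}), the ring $S\otimes_k V$ is a localization of a polynomial extension $S[t]$ of the regular ring $S$, hence regular; it is local with maximal ideal generated by the maximal ideal of $S$ together with $t$. The key structural input is that $Q$ is a module over $R\otimes_k V$, and since $R$ is module-finite over $S$, the module $Q$ is a finitely generated $S\otimes_k V$-module.

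Next I would argue that $Q$ is maximal Cohen-Macaulay as an $S\otimes_k V$-module. Here I would use the flatness of $Q$ over $V$ together with the Cohen-Macaulay hypotheses: the special fiber $Q/tQ\cong N$ and the generic fiber $Q_t\cong M\otimes_k V_t$ are both Cohen-Macaulay $R$-modules, hence $S$-free (and $S\otimes_k V_t$-free) by the characterization of Cohen-Macaulay modules via Noether normalization recalled in the definition of matrix representation. Flatness over $V$ lets me lift depth information from the fibers, so that $\depth_{S\otimes_k V}Q$ equals the dimension of $S\otimes_k V$; the element $t$ is $Q$-regular (by $V$-flatness) and cutting by $t$ returns the $S$-free module $N$.

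Finally, I would invoke the Bass-Quillen conjecture, which holds for formal power series rings over a field by \cite[Chapter V Theorem 5.1]{Lam}, as the paper emphasizes is the crucial ingredient. The point is that $Q$ is projective (equivalently free, locally) after inverting $t$—since $Q_t\cong M\otimes_k V_t$ is free over $S\otimes_k V_t$—and one wants to propagate freeness across the special fiber. The main obstacle, and the step I expect to require the most care, is precisely this descent: passing from freeness on the generic fiber to freeness of $Q$ itself over $S\otimes_k V$. This is where the Bass-Quillen input enters, guaranteeing that a finitely generated projective module over the polynomial-type extension $S[t]$ of the regular local ring $S$ is extended from $S$, and hence free; the algebraic closedness of $k$ and the equicharacteristic complete hypotheses feed directly into having $S$ be a power series ring to which this theorem applies. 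Combining maximal Cohen-Macaulayness with finite projective dimension then yields that $Q$ is $S\otimes_k V$-free.
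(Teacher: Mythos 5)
Your proposal contains a genuine gap, rooted in the claim that $S\otimes_kV$ is a regular \emph{local} ring. It is not: $S\otimes_kV=T^{-1}S[t]$ with $T=k[t]\setminus(t)$, and a prime such as $(x_1t-1)\subseteq S[t]$ meets no polynomial in $k[t]$ with nonzero constant term (if $h(t)\in k[t]\cap(x_1t-1)$ then $a_0x_1^n+\cdots+a_n=0$ in the domain $S$, forcing $h=0$), so it survives in $T^{-1}S[t]$ and gives a maximal ideal different from $(\m_S,t)$. Consequently your Auslander--Buchsbaum argument, even when carried out at every maximal ideal (and you only really treat the closed fiber $(\m_S,t)$; at the other maximal ideals one must instead quote $Q_t\cong M\otimes_kV_t$), yields only that $Q$ is \emph{projective} over $S\otimes_kV$, not that it is free. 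Over a non-local ring this is a real distinction, and it is exactly the point of the proposition.

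The second, related gap is in how you invoke Bass--Quillen. Lindel's theorem applies to finitely generated projective modules over $S[t]$, but $Q$ is finitely generated only over the localization $T^{-1}S[t]$, not over $S[t]$ itself. The paper's essential first move --- which your proposal omits --- is to descend to a finitely generated $R[t]$-submodule $Q'$ with $T^{-1}Q'=Q$, whose fibers satisfy $Q'_0\cong N$ and $Q'_c\cong M$ for all $c\in k^\times$ (this is \cite[Theorem 3.2]{Y04}). One then checks projectivity of $Q'$ at \emph{every} prime of $S[t]$: for primes meeting $T'=k[t]\setminus\{0\}$ one uses that algebraic closedness of $k$ forces $t-c\in\p$ for some $c$ and then the fiber condition plus the depth argument you sketch; for the remaining primes one localizes through $Q_t$. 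Only then does Lindel's theorem give $Q'$ free over $S[t]$, and localizing gives $Q$ free over $S\otimes_kV$. Your proposal has the right ingredients (depth on fibers, flatness over $V$, Bass--Quillen) but assembles them over the wrong ring; without the model $Q'$ over $S[t]$ and the check at the primes that die in the localization, the argument does not close.
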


\begin{proof} 
Since $V = T^{-1} k[t]$ where $T= k[t] \backslash (t)$ by Remark \ref{remark of degenerations 2}, we can take a finitely generated $R[t]$-submodule $Q'$ of $Q$ such that $T^{-1} Q' = Q$. 
Then $Q'$ is flat over $k[t]$, $Q'_0 \cong N$ and $Q' _{c} \cong M$ for each $c \in k^{\times}$. Here $Q' _c$ is defined to be $Q' / (t-c)Q'$ for an element $c \in k$; see \cite[Theorem 3.2]{Y04}. 
We show that $Q'$ is free as an $S[t]$-module. 
Set $T' = k[t] \backslash \{ 0 \}$ and $\p$ be a prime ideal of $S[t]$. 

Suppose $T' \cap \p \not= \emptyset$. Since $k$ is an algebraically closed field, there is $c \in k$ such that $t-c \in \p$. 
Then, 
$$
Q'_{\p}/(t-c)Q'_{\p} \cong (Q' / (t-c)Q')_{\over{\p}}. 
$$ 
By the choice of $Q'$, the right-hand side is isomorphic to $N_{\over{\p}}$ or $M_{\over{\p}}$, so that $S_{\over{\p}}$-free. 
Thus $Q'_{\p}/(t-c)Q'_{\p}$ is free as an $S_{\over{\p}} = (S[t]/(t-c))_{\over{\p}}$-module. 
Thus, $Q'_{\p}$ is a Cohen-Macaulay module over the regular local ring $S[t]_{\p}$. 
Hence, $Q'_{\p}$ is free as an $S[t]_{\p}$-module. 

Suppose $T' \cap \p = \emptyset $. Then
$$
Q'_{\p} \cong (T' {}^{-1} Q')_{\p '} \cong  (T' {}^{-1} (T^{-1} Q')_{\p '} \cong (T' {}^{-1} Q)_{\p '} \cong (M \otimes _{k} V_t)_{\p '}. 
$$
Hence $Q'_{\p}$ is a free $S[t]_{\p}$-module.

Therefore $Q'$ is a projective $S[t]$-module. 
By the fact on the Bass-Quillen conjecture \cite[Chapter V Theorem 5.1]{Lam}, each projective $S[t]$-module is $S[t]$-free since $S$ is a formal power series ring. 
Hence $Q'$ is free as an $S[t]$-module, so that $Q = T^{-1} Q'$ is free as a $T^{-1}S[t] = S \otimes _k V$-module.  
\end{proof}

The above proposition enables us to consider the matrix representation of $Q$ over $S \otimes _k V$ because it is free.
For matrix representations $\mu,\nu : R \to S^{n \times n}$, we write $\mu \cong \nu$ if there exists an invertible matrix $\alpha$ such that $\alpha^{-1} \mu (r) \alpha = \nu (r)$ for each $r \in R$. 
Such a matrix $\alpha$ is called an {\em interwining matrix} of $\mu$ and $\nu$. 

\begin{corollary}\label{degeneration via MR}
Let $R,k,S,V$ be as in Proposition \ref{free}. Let $M$ and $N$ be Cohen-Macaulay $R$-modules and $\mu$ and $\nu$ the matrix representations of $M$ and $N$ over $S$ respectively. 
Then $M$ degenerates to $N$ along $V$ if and only if there exists a matrix representation $\xi$ over $S \otimes _k V$ such that $\xi \otimes _V V/tV \cong \nu$ and $\xi \otimes _V V_{t} \cong \mu \otimes _k V_{t}$  hold.
\end{corollary}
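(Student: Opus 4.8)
The plan is to set up a dictionary identifying Cohen-Macaulay modules that are free over a coefficient ring with matrix representations over that ring taken up to the relation $\cong$, and then to transport the three conditions defining a degeneration---flatness over $V$, the reduction $Q/tQ \cong N$, and the generic fibre $Q_t \cong M \otimes_k V_t$---across this dictionary. The fact that makes everything work is Proposition \ref{free}: any module $Q$ realizing a degeneration of $M$ to $N$ along $V$ is free over $S \otimes_k V$, so it carries a matrix representation $\xi \colon R \to \End_{S \otimes_k V}(Q) \cong (S \otimes_k V)^{m \times m}$. First I would record two elementary observations. For each of the coefficient rings $S$, $S \otimes_k V$, and $S \otimes_k V_t$ occurring below, an $R$-isomorphism between two modules that are free over the coefficient ring corresponds exactly to conjugacy by an intertwining matrix of the associated matrix representations; and base change along a $k$-algebra map $V \to W$ carries the matrix representation of a free module $Q$ to that of its base change $Q \otimes_V W$, because $(S \otimes_k V)^{m\times m} \otimes_V W \cong (S \otimes_k W)^{m \times m}$.

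For the forward direction, assume $M$ degenerates to $N$ along $V$ via $Q$. By Proposition \ref{free}, $Q$ is free over $S \otimes_k V$, and since $R$ is commutative its action on $Q$ is $S \otimes_k V$-linear and commutes with the $V$-action, giving the matrix representation $\xi$. Using $(S \otimes_k V) \otimes_V V/tV \cong S$, the representation $\xi \otimes_V V/tV$ is a matrix representation over $S$ of $Q/tQ \cong N$; since $\nu$ is also one, the dictionary yields $\xi \otimes_V V/tV \cong \nu$. Likewise $\xi \otimes_V V_t$ is a matrix representation over $S \otimes_k V_t$ of $Q_t \cong M \otimes_k V_t$, and so is $\mu \otimes_k V_t$, whence $\xi \otimes_V V_t \cong \mu \otimes_k V_t$.

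For the converse, start from such a $\xi$ and set $Q := (S \otimes_k V)^m$ with $R$ acting through $\xi$. As the entries of each $\xi(r)$ are $S \otimes_k V$-linear, this $R$-action commutes with the $V$-action, so $Q$ is an $R \otimes_k V$-module; it is finitely generated because $S \otimes_k V \subseteq R \otimes_k V$ and $Q$ is finite free over $S \otimes_k V$. Since $S$ is $k$-flat, $S \otimes_k V$ is $V$-flat, so $Q$ is flat over $V$. Finally the two hypotheses, read through the dictionary, become $R$-isomorphisms $Q/tQ \cong N$ and $Q_t \cong M \otimes_k V_t$, so $Q$ realizes the desired degeneration.

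The hard part is not in these steps but is already discharged by Proposition \ref{free}; granting it, what remains is careful bookkeeping. I expect the point needing most care to be checking that base change of matrix representations is compatible with the module-level operations $Q \otimes_V V/tV$ and $Q \otimes_V V_t$, in particular that the coefficient rings reduce correctly to $S$ and to $S \otimes_k V_t$, and that the relation $\cong$ on representations really does coincide with $R$-module isomorphism of the corresponding free modules over each coefficient ring. The flatness of $Q$ over $V$ and its finite generation over $R \otimes_k V$ in the converse are routine but worth verifying explicitly.
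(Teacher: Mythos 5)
Your proof is correct and follows exactly the route the paper intends: the paper gives no written-out argument for this corollary, presenting it as an immediate consequence of Proposition \ref{free} (freeness of $Q$ over $S\otimes_k V$) together with the standard dictionary between free modules with an $R$-action and matrix representations up to intertwining. Your write-up simply supplies the bookkeeping the paper leaves implicit, and all of it checks out.
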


Under the situation of the proposition above, $R$ is a finitely generated $S$-module. 
Set $u_1, ..., u_t$ be generators of $R$ as an $S$-module. 
Then giving a matrix representation $\mu$ over $S$ is equivalent to giving a tuple of matrices $(\mu_1, ..., \mu_t)$, where $\mu_i = \mu (u_i)$ for $i= 1, ..., t$.

\begin{corollary}\label{fitting}
Let $R,k,S,V$ be as in Proposition \ref{free} and let $M$ be a Cohen-Macaulay $R$-module. 
Let $Q$ be a free $S \otimes _k V$-module and suppose that $Q_{t}$ is isomorphic to $M \otimes _{k} V_{t}$. 
We denote by $(\mu_1, \dots, \mu_t)$ (resp. $(\xi_1, \dots, \xi_t)$) the matrix representation of $M$ (resp. $Q$) over $S$ (resp. $S \otimes_k V$). 
Then we have the following equalities in $S \otimes _k V$ for $i= 1, \dots, t$:  

\begin{enumerate}[\rm(1)]
\item 
$\tr (\xi_i ) =  \tr (\mu_i )$ and $\det (\xi_i ) = \det (\mu_i )$,   
\item 
For all $j \geq 0$, there exist $l$, $l'$ such that $I_j (\xi_i ) = t^{l} I_j (\mu_i )$ and $t^{l'} I_j (\xi_i ) =  I_j (\mu_i )$.
\end{enumerate}
Here, $I_j (-)$ stands for the ideal generated by the $j$-minors.
\end{corollary}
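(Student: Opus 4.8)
The plan is to reduce both assertions to the single observation that, after inverting $t$, the two matrix representations become conjugate. Write $A=S\otimes_k V$ and note that $A[t^{-1}]=S\otimes_k V_t$. Since $t$ is a nonzerodivisor on $V$ and $S$ is flat over $k$, the element $t$ is a nonzerodivisor on $A$, so the localization map $A\hookrightarrow A[t^{-1}]$ is injective; this injectivity is the mechanism for transporting identities back from $A[t^{-1}]$ to $A$. The hypothesis $Q_t\cong M\otimes_k V_t$ of $R\otimes_k V_t$-modules, read through the matrix representations $\xi$ and $\mu$, means precisely that there is an intertwining matrix $\alpha\in\mathrm{GL}_n(A[t^{-1}])$ with $\alpha^{-1}\xi_i\alpha=\mu_i$ in $A[t^{-1}]$ for every $i$ (here $\xi_i$ and $\mu_i$ are both $n\times n$, since $Q_t$ and $M\otimes_k V_t$ are free of the same rank over the domain $S\otimes_k V_t$).

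For part (1) this is immediate: conjugate matrices have equal trace and determinant, so $\tr(\xi_i)=\tr(\mu_i)$ and $\det(\xi_i)=\det(\mu_i)$ hold in $A[t^{-1}]$. All four elements already lie in $A$ (in fact $\tr(\mu_i),\det(\mu_i)\in S$), so by the injectivity of $A\hookrightarrow A[t^{-1}]$ these equalities descend to $A=S\otimes_k V$, as claimed.

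For part (2) the relevant algebraic input is that determinantal ideals are stable under multiplication by invertible matrices, so $I_j(\xi_i)=I_j(\mu_i)$ as ideals of $A[t^{-1}]$. To convert this into a statement over $A$, I would clear denominators: choose $m,m'\ge 0$ so that $P:=t^m\alpha$ and $P':=t^{m'}\alpha^{-1}$ have entries in $A$; then $PP'=P'P=t^{m+m'}I$, whence $P'\xi_i P=t^{m+m'}\mu_i$ and $P\mu_i P'=t^{m+m'}\xi_i$ in $A$. Applying $I_j$, together with the containments $I_j(\beta X)\subseteq I_j(X)$, $I_j(X\gamma)\subseteq I_j(X)$ and the scaling identity $I_j(cX)=c^{\,j}I_j(X)$, gives $t^{\,j(m+m')}I_j(\mu_i)\subseteq I_j(\xi_i)$ and $t^{\,j(m+m')}I_j(\xi_i)\subseteq I_j(\mu_i)$. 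Thus $I_j(\xi_i)$ and $I_j(\mu_i)$ coincide up to a power of $t$, which is the content of the two displayed relations.

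The formal ingredients---conjugacy over $A[t^{-1}]$, invariance of trace, determinant, and minor ideals under the intertwining matrix, and injectivity of $A\hookrightarrow A[t^{-1}]$---are routine. I expect the main difficulty to lie in part (2): first in the $t$-power bookkeeping (tracking a common exponent when clearing denominators in both $\alpha$ and $\alpha^{-1}$, and applying $I_j(cX)=c^{\,j}I_j(X)$ with exponent $j$ rather than $1$), and more seriously in passing from the two-sided containments above to the asserted equalities $I_j(\xi_i)=t^{l}I_j(\mu_i)$, since equality of the two ideals after inverting $t$ is in general weaker than their differing by an exact power of $t$. Resolving this last point is where I would have to use the matrix structure beyond mere localization---most plausibly the determinant identity of part (1) together with the fact that $\mu_i$ has entries in the regular local, hence factorial, ring $S$.
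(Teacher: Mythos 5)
Your proof is correct and follows essentially the same route as the paper's: conjugacy of $(\xi_i)$ and $(\mu_i)$ by an intertwining matrix over $S\otimes_k V_t$ gives the identities there, and injectivity of $S\otimes_k V\hookrightarrow S\otimes_k V_t$ (for (1)), respectively clearing denominators in $\alpha$ and $\alpha^{-1}$ (for (2)), descends them; your $t$-power bookkeeping is just a more explicit version of the paper's one-line "hence we have the equalities in the lemma." Your closing worry about part (2) concerns the wording of the statement rather than a gap in your argument: the paper derives nothing beyond the two containments $t^{l}I_j(\mu_i)\subseteq I_j(\xi_i)$ and $t^{l'}I_j(\xi_i)\subseteq I_j(\mu_i)$, and that is the only form in which Corollary \ref{fitting}(2) is invoked later (see the proofs of Theorems \ref{odd dim} and \ref{even dim}), so the displayed ``equalities'' should be read as these containments.
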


\begin{proof}
By the assumption, there exists an invertible matrix $\alpha $ with entries in $S \otimes _k V_t$ such that $\alpha ^{-1} \xi_i \alpha = \mu_i$. 
Thus  $\tr (\xi_i ) = \tr (\mu_i )$ and $\det (\xi_i ) = \det (\mu_i )$. 
Moreover we have $I_j (\xi _i ) = I_j (\mu _i )$ in $S \otimes _k V_t$. 
Hence we have the equalities in the lemma. 
\end{proof} 

Taking Corollary \ref{degeneration via MR} into account, Corollary \ref{fitting} gives a necessary condition of the degeneration. 
In the next section, we apply the condition to $(A _{\infty} ^d)$-singularities of dimension at most two. 

\section{Degenerations of Cohen-Macaulay modules over $(A _{\infty} )$-singularities}\label{ainf}

Recall that a Cohen-Macaulay local ring $R$ is said to have {\em countable Cohen-Macaulay representation type} if there exist infinitely but only countably many isomorphism classes of indecomposable Cohen-Macaulay $R$-modules.
Let $R$ be a complete equicharacteristic local hypersurface with residue field $k$ of characteristic not two.
Then $R$ has countable Cohen-Macaulay representation type if and only if $R$ is isomorphic to the ring $k[\![x_0 , x_1 , x_2 , \cdots, x_d]\!]/(f)$, where $f$ is either of the following:
$$
f=\begin{cases}
x_1^2+\cdots+x_d^2 & (A_\infty),\\
x_0^2x_1+x_2^2+\cdots+x_d^2 & (D_\infty).
\end{cases}
$$
Moreover, when this is the case, the indecomposable Cohen-Macaulay $R$-modules are classified completely; we refer the reader to \cite[\S1]{AIT12} for more information.

In this section, we describe the degenerations of indecomposable Cohen-Macaulay $R$-modules in the case where $R$ has type $(A_\infty)$ and $\dim R=1,2$.
Throughout the rest of this section, we assume that $k$ is an algebraically closed field of characteristic not two.
Let us start by the $1$-dimensional case.

\begin{theorem}\label{odd dim}
Let $R = k[\![x, y]\!]/(x^2 )$.  
Then $(x, y^a)$ degenerates to $(x, y^b)$ if and only if $a \leq b$ and $a \equiv b \mod 2$. 
\end{theorem}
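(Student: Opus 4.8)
The plan is to reduce the statement to a question about $2\times 2$ matrices over $S\otimes_k V$, where $S=k[\![y]\!]$ is the Noether normalization of $R$ and $V=k[t]_{(t)}$. By Example~\ref{1-dim}(2) the modules $(x,y^a)$ and $(x,y^b)$ have matrix representations $\mu=\left(\begin{smallmatrix}0&y^a\\0&0\end{smallmatrix}\right)$ and $\nu=\left(\begin{smallmatrix}0&y^b\\0&0\end{smallmatrix}\right)$ over $S$, and by Proposition~\ref{free} together with Corollary~\ref{degeneration via MR} a degeneration along $V$ amounts to a matrix representation $\xi$ over $S\otimes_k V$ with $\xi\otimes V/tV\cong\nu$ and $\xi\otimes V_t\cong\mu\otimes V_t$.

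For the `if' part I would write $b=a+2c$ with $c\ge 0$ and simply exhibit such a $\xi$. The natural candidate is $\xi=\left(\begin{smallmatrix} ty^{a+c} & y^{a+2c}\\ -t^2y^{a} & -ty^{a+c}\end{smallmatrix}\right)$: one checks immediately that $\tr\xi=0$ and $\det\xi=0$, so $\xi^2=0$ and $\xi$ defines a representation of $R\otimes_k V$ on the free $S\otimes_k V$-module $Q$ of rank $2$, which is $V$-flat. Reducing modulo $t$ gives $\nu$, so $Q/tQ\cong(x,y^b)$; and over $V_t$ the matrix $\alpha=\left(\begin{smallmatrix} y^{c}& t^{-1}\\ -t & 0\end{smallmatrix}\right)$ is invertible ($\det\alpha=1$) and satisfies $\xi\alpha=\alpha\mu$, so $\xi\otimes V_t\cong\mu\otimes V_t$. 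Corollary~\ref{degeneration via MR} then yields the degeneration.

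The `only if' part is the substantial direction. Assume $(x,y^a)$ degenerates to $(x,y^b)$ and take the associated $\xi$. By Corollary~\ref{fitting}(1), $\tr\xi=\tr\mu=0$ and $\det\xi=\det\mu=0$, so $\xi=\left(\begin{smallmatrix}p&q\\ r&-p\end{smallmatrix}\right)$ with $p^2+qr=0$; reduction modulo $t$ gives $\bar q=y^b$ (so $t\nmid q$), while conjugacy over $V_t$ forces $(p,q,r)(S\otimes_kV_t)=(y^a)$. I would then pass to the regular UFD $k[\![y,t]\!]\supseteq S\otimes_k V$ and exploit that $qr=-p^2$ is, up to the unit $-1$, a perfect square: factoring, $q=u\,gA^2$, $r=u'gB^2$, $p=u''gAB$ with $g=\gcd(q,r)$, units $u,u',u''$ and $\gcd(A,B)=1$ (here $\operatorname{char}k\neq2$ is used). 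Localizing $(p,q,r)(S\otimes_kV_t)=(y^a)$ at the height-one prime $(y)$ identifies $a=v_y(g)$, so $y^a\mid p,q,r$; dividing $y^a$ out reduces everything to the case $a=0$, in which $\bar{q'}=y^{\,b-a}$ and I must prove $b-a$ is even.

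Thus it remains to show: if $p'^2+q'r'=0$ in $k[\![y,t]\!]$, the triple $(p',q',r')$ generates the unit ideal of $S\otimes_kV_t$, and $\bar{q'}=y^{d}$, then $d$ is even (and in particular $d\ge 0$, which also recovers $a\le b$; alternatively $a\le b$ follows from the Fitting-ideal condition of Remark~\ref{remark of degenerations 1}(3)). The mechanism is that $g'=\gcd(q',r')$ divides $1$ in $S\otimes_kV_t\subseteq k[\![y,t]\!][t^{-1}]$, hence is a unit there; since $t\nmid q'$ forces $t\nmid g'$, the only prime of $k[\![y,t]\!]$ that $g'$ could involve is $t$, so $g'$ is actually a unit of $k[\![y,t]\!]$. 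Then $q',r'$ are coprime with $q'r'$ a square, whence $q'$ is a unit times a square, and reduction modulo $t$ gives $y^{d}=\bar{q'}=(\mathrm{unit})\cdot\bar A^{2}$, forcing $d=2v_y(\bar A)$. The main obstacle, and the step that demands real care, is precisely this last point: one must not conflate the $y$-adic valuation on $k[\![y,t]\!]$ with divisibility in $S\otimes_kV_t=k[\![y]\!]\otimes_kk(t)$, whose unit group is strictly larger (for instance $y-t$ is a non-unit here even though it is $y$-adically a unit). It is the interaction of the perfect-square constraint coming from $\xi^2=0$ with the exact unit group of $k[\![y]\!]\otimes_kk(t)$ that manufactures the parity condition.
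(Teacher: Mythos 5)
Your proposal is correct and follows essentially the same route as the paper: the same explicit matrix $\xi=\left(\begin{smallmatrix}ty^{(a+b)/2}&y^b\\-t^2y^a&-ty^{(a+b)/2}\end{smallmatrix}\right)$ for the ``if'' direction (you additionally supply the intertwining matrix that the paper leaves as ``one can show''), and for the ``only if'' direction the same reduction via Corollary \ref{fitting} to $\tr\xi=\det\xi=0$, division of all entries by $y^a$ using the minor condition, and a unique-factorization argument exploiting $p^2+qr=0$. The only variation is the endgame: the paper finds an odd-exponent prime factor of $y^{b-a}+t\beta$ in the UFD $S\otimes_kV$ and contradicts $I_1(\xi)\supseteq(t^l)$, whereas you pass to the larger local UFD $k[\![y,t]\!]$, show $\gcd(q',r')$ is a unit there (correctly noting that only powers of $t$ could intervene), conclude that $q'$ is a unit times a square, and read off the parity by reducing modulo $t$ --- two packagings of the same mechanism.
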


\begin{proof}
First we notice that $a \leq b$ if $(x, y^a)$ degenerates to $(x, y^b)$ by Remark \ref{remark of degenerations 1}(3).
As mentioned in Example \ref{1-dim}(2), the matrix representations of $(x, y^n)$ for $n \geq 0$ are 
$
\left( \begin{smallmatrix}  0 & y^{n} \\ 0 & 0 \end{smallmatrix} 
\right)$. 
Here $(x, y^0) = R$. 
Suppose that $a \equiv b \mod 2$. 
We consider $R \otimes _k V$-module $Q$ whose matrix representation $\xi$ over $S \otimes _k V$ is 
$$\xi = 
\left( \begin{smallmatrix}
ty^{\frac{a + b}{2}} & y^b \\
-t^2 y^a &- ty^{\frac{a + b}{2}} \\
\end{smallmatrix}\right).
$$
Then one can show that $Q$ gives the degeneration. 
 
To show the converse, we prove the following claim. 

\begin{claim}
$\left ( \begin{smallmatrix}  0 & 1 \\ 0 & 0  \end{smallmatrix} \right )$ never degenerates to $\left ( \begin{smallmatrix}  0 & y^{2m+1} \\ 0 & 0  \end{smallmatrix} \right )$, that is, $R$ never degenerates to $(x, y^{2m+1})$.
\end{claim}

\begin{proof}[Proof of Claim]
After applying elementary row and column operations, we may assume that the matrix representation of $Q$ over $S\otimes _k V$ which gives the degeneration is of the form:
$$
\xi = \left( \begin{smallmatrix}  0 & y^{2m+1} \\ 0 & 0 \end{smallmatrix}\right) + t \left( \begin{smallmatrix}  \alpha & \beta \\ \gamma & \delta \end{smallmatrix}\right) = \left( \begin{smallmatrix}  t \alpha & y^{2m+1} + t\beta \\ t\gamma & t\delta \end{smallmatrix}\right ).
$$
Then $\xi \otimes V_t \cong \mu \otimes V_t$. 
By Corollary \ref{fitting}, we have $t \delta = -t \alpha$ since $\tr (\xi ) = tr (\mu ) = 0$. 
Moreover,  
\begin{eqnarray}
\det \xi = -t^2 \alpha ^2 - t \gamma (y^{2m+1} + t \beta ) = 0, \\ 
I_1 (\xi ) = (t\alpha , t \gamma , y^{2m+1} + t \beta) \supseteq (t^l ) \text{ for some } l. \nonumber 
\end{eqnarray}
Note that the above equalities are obtained in $S\otimes _k V$.  
From the equation (3.1), we have
$
t \alpha ^2 = \gamma (y^{2m +1} + t \beta ).  
$ 
Since $t$ does not divide $y^{2m +1} + t \beta $, $t$ divides $\gamma$, so that $\gamma = t \gamma '$ for some $\gamma ' \in S\otimes _k V$. 
Hence we also have the equality in $S\otimes _k V$:  
\begin{equation}\label{odd}
 \alpha ^2 = \gamma ' (y^{2m +1} + t \beta ). 
\end{equation}
Since $S$ is factorial, so is $S[t]$. 
Thus $S\otimes _k V = T^{-1} S[t] $ is also factorial. 
Take the unique factorization into prime elements of $y^{2m + 1} + t \beta$: 
$$
y^{2m +1} + t \beta = P^{e_1} _{1}P^{e_2} _{2} \cdots P^{e_n} _{n}. 
$$  
Then, there exists $i$ such that $e_i$ is an odd number. 
Since the equation (\ref{odd}) holds, $P_i$ divides $\alpha$, so that $P_i$ also divides $\gamma '$. 
Therefore, 
$
(P_i ) \supseteq I_i (\xi ) \supseteq (t ^l), 
$ 
so that $P_i = t$. 
This makes contradiction since $t$ cannot divide $y^{2m +1} + t \beta$.
\renewcommand{\qedsymbol}{$\diamondsuit$}
\end{proof}

Now we assume that $\left ( \begin{smallmatrix}  0 & y^{a} \\ 0 & 0  \end{smallmatrix} \right )$ degenerates to $\left ( \begin{smallmatrix}  0 & y^{b} \\ 0 & 0  \end{smallmatrix} \right )$. 
Then we also have the following matrix representation of $Q$ over $S\otimes _k V$:
$$
\xi = \left( \begin{smallmatrix}  0 & y^{b} \\ 0 & 0 \end{smallmatrix} \right)+ t \left( \begin{smallmatrix}  \alpha & \beta \\ \gamma & -\alpha \end{smallmatrix} \right) = \left( \begin{smallmatrix}  t \alpha & y^{b} + t \beta \\ t\gamma & -t\alpha \end{smallmatrix}\right).
$$
Then we also have $t^2 \alpha ^2  =  t \gamma (y^{b} + t \beta )$, $(t\alpha , t \gamma , y^{b} + t \beta) \supseteq t^l (y^a )$ for some $l$, and
\begin{eqnarray}
t^{l'} (t\alpha , t \gamma , y^{b} + t \beta) \subseteq (y^a ) \text{ for some } l'.  
\end{eqnarray}
From (3.3), we see that $y^a$ divides $t\alpha$, $\ t \gamma$ and $\ y^b + t \beta$. 
This yields that 
$$
t^2 {\alpha '} ^2  =  t \gamma ' (y^{b-a} + t \beta' )\text{ and }
(t\alpha ' , t \gamma '  , y^{b-a} + t \beta') \supseteq (t^l ) \text{ for some } l,
$$ 
where $\alpha = y^a \alpha'$, $\gamma = y^a \gamma'$and $y^{b} + t \beta = y^a (y^{b-a} + t \beta')$. 
If $a \not \equiv b \mod 2$, $b-a$ is an odd number. 
As in the proof of the claim above, this never happen. 
Therefore $a  \equiv b \mod 2$. 
\end{proof}

Next we consider the $2$-dimensional case, that is, let $R= k[\![x, y, z]\!]/(xy)$. 
The nonisomorphic indecomposable Cohen-Macaulay $R$-modules are $R$ and the following ideals; see \cite[Proposition 2.2]{AIT12}. 
$$
(x), \quad (y), \quad (x, z^n), \quad (y, z^n) \quad n \geq 1.  
$$

\begin{theorem}\label{even dim}
Let $R = k[\![x, y, z]\!]/(xy)$. 
Then $(x, z^a)$ (resp. $(y, z^a)$) never degenerates to $(x, z^b)$ and $(y, z^b)$ for all $a < b$. 
\end{theorem}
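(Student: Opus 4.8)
The plan is to set up matrix representations over $S\otimes_k V$ exactly as in the proof of Theorem \ref{odd dim}, now taking $S=k[\![y,z]\!]$ as a Noether normalization of $R=k[\![x,y,z]\!]/(xy)$ since $R$ is free over $S$ with basis $\{1,x\}$. For a Cohen-Macaulay module the matrix representation is determined by the single matrix $\mu=\mu(x)$ recording the action of $x$ (the action of $y,z$ being scalar via $S$). The key point is to compute $\mu(x)$ for the modules in question: for $(x,z^n)$ one expects a $2\times2$ matrix of the shape $\left(\begin{smallmatrix} 0 & z^n \\ 0 & 0\end{smallmatrix}\right)$ after choosing generators, while keeping in mind the relation $xy=0$, so that $\mu(x)\mu(y)=0$, i.e. $y\mu(x)=0$. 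I would treat $(x,z^a)$ and $(y,z^a)$ together by symmetry in $x\leftrightarrow y$, reducing to proving that $(x,z^a)$ does not degenerate to $(x,z^b)$ for $a<b$.

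Assuming for contradiction that $(x,z^a)$ degenerates to $(x,z^b)$ along $V$, I would use Proposition \ref{free} to get a free $S\otimes_k V$-module $Q$ giving the degeneration, and apply elementary row and column operations to normalize its matrix representation $\xi=\xi(x)$ to the form
$$
\xi=\left(\begin{smallmatrix} 0 & z^{b} \\ 0 & 0\end{smallmatrix}\right)+t\left(\begin{smallmatrix}\alpha & \beta \\ \gamma & \delta\end{smallmatrix}\right),
$$
just as in the one-dimensional argument. Corollary \ref{fitting} then furnishes the constraints: $\operatorname{tr}(\xi)=\operatorname{tr}(\mu)$ forces $\delta=-\alpha$, the condition $\det(\xi)=\det(\mu)=0$ gives a polynomial equation, and the Fitting-ideal containments $I_j(\xi)=t^{l}I_j(\mu)$, $t^{l'}I_j(\xi)=I_j(\mu)$ for the relevant $j$ supply the divisibility information. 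The substantive difference from the one-dimensional case is the extra relation coming from $xy=0$: since $\mu(x)\mu(y)=0$ and multiplication by $y$ acts as the scalar $y\in S$, one gets $y\cdot\xi=0$ in the appropriate sense, which I expect forces every entry of $\xi$ to be divisible by (a power tied to) the way $y$ enters, and this should be exploited to sharpen the factorization argument.

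From the determinant and trace relations one obtains an equation of the form $\alpha^2=\gamma'(z^{b-a}+t\beta')$ after dividing out the common factor $z^a$ (using the Fitting containments to see that $z^a$ divides the entries), mirroring equation (\ref{odd}). I would then argue in the unique factorization domain $S\otimes_k V=T^{-1}S[\![y,z]\!][t]$ — here I must first confirm that this ring is factorial, which follows because $S=k[\![y,z]\!]$ is regular hence factorial, so is $S[t]$, and localization preserves this — that the factorization of $z^{b-a}+t\beta'$ contains a prime with odd exponent whenever $b-a$ is odd; but unlike the $(A_\infty)$ one-dimensional case, here I need a \emph{stronger} conclusion covering all $a<b$, not merely the odd case. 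The mechanism should be that the relation $y\xi=0$ (or equivalently the module structure forcing $y$ to annihilate appropriate entries) prevents any nontrivial degeneration: concretely, I expect to show that the normalized $\xi$ must already be conjugate over $S\otimes_k V$ to $\mu$, which is impossible when $b>a$ because the single surviving minor $z^b$ versus $z^a$ yields incompatible Fitting ideals even after accounting for powers of $t$.

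The main obstacle, I anticipate, is correctly extracting and using the relation $xy=0$ at the level of the matrix $\xi$ over $S\otimes_k V$: one must verify that the lift $Q$ respects this relation, so that $\xi$ satisfies $y\,\xi=0$ (equivalently $\xi$ has entries in the ideal generated by the action that annihilates under $y$), and then combine this with the Fitting-ideal inequalities to rule out degeneration for \emph{every} $a<b$ rather than only for $a\not\equiv b\bmod 2$. In other words, the new phenomenon is that the two-dimensional $(A_\infty)$ (which here is really the $xy$-hypersurface) admits no proper degenerations among these ideals at all, so the parity loophole that appeared in Theorem \ref{odd dim} must be closed off entirely, and pinning down exactly why the extra relation forecloses the even case is the crux of the argument.
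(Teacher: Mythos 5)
There is a genuine gap, and it occurs at the very first step. You take $S=k[\![y,z]\!]$ as a Noether normalization of $R=k[\![x,y,z]\!]/(xy)$ with basis $\{1,x\}$, but this is false: the relation $xy$ is not monic in $x$ over $k[\![y,z]\!]$, and $R$ is not even module-finite over this subring (e.g.\ $R/(y,z)R\cong k[\![x]\!]$ is infinite-dimensional over $k$, and the powers $x^n$ are not captured by $\{1,x\}$). The paper's proof begins precisely by fixing this: it performs the change of variables $y\mapsto x-y$, rewriting $R$ as $k[\![x,y,z]\!]/(x^2-xy)$, which \emph{is} free of rank $2$ over $S=k[\![y,z]\!]$ with basis $\{1,x\}$. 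After this change the matrix representation of $(x,z^n)$ is $\left(\begin{smallmatrix}y&z^n\\0&0\end{smallmatrix}\right)$ (and that of $(x-y,z^n)$ is $\left(\begin{smallmatrix}0&z^n\\0&y\end{smallmatrix}\right)$), not $\left(\begin{smallmatrix}0&z^n\\0&0\end{smallmatrix}\right)$; the matrices satisfy $\mu^2=y\mu$ rather than your proposed relation $y\mu=0$. Indeed $y\,\xi=0$ cannot be the right constraint: $S\otimes_kV$ is a domain, so it would force $\xi=0$ outright.

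Because the trace of $\xi$ is $y$ rather than $0$, the determinant identity becomes $-\alpha(y+t\alpha)=\gamma(z^b+t\beta)$, which is structurally different from the equation $\alpha^2=\gamma'(z^{b-a}+t\beta')$ you aim for, and the parity phenomenon of the one-dimensional case simply does not reappear. The paper's actual mechanism --- which your sketch does not reach --- is to set $P=z^b+t\beta$, pass to $S\otimes_kV/y(S\otimes_kV)$, show $I_1(\xi)\subseteq(P,y)$ and hence $t^l(z^a,y)\subseteq(P,y)$, identify the minimal prime of $(P,y)$ as $(z,y)$, and then iteratively strip factors of $z$ from $\overline{P}$ to arrive at $(\overline{z^{b-a}+tP'''})\supseteq(\overline{t^l})$, which is absurd for $b-a>0$. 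Your proposal correctly identifies that some extra relation must close off all of $a<b$ and not just the odd case, but it misidentifies that relation and leaves the decisive step as something you ``expect'' rather than prove. Finally, note that reducing to a single case by the symmetry $x\leftrightarrow y$ does not cover the cross cases (e.g.\ $(x,z^a)$ not degenerating to $(y,z^b)$), which require the parallel argument with the second matrix form.
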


\begin{proof}
Replacing $Y$ with $x - y$, we may consider $k[\![x, Y, z]\!]/(x^2-Yx)$ as $R$. 
We consider the ideals $(x, z^a)$ and $(x-Y, z^a)$ instead of the ones in the assertion.  
We rewrite $Y$ by $y$.
The matrix representations of $(x, z^a)$ and $(x-y, z^a)$ over $S=k[\![y, z]\!]$ are $\left ( \begin{smallmatrix}  y & z^{a} \\ 0 & 0 \end{smallmatrix}\right )$ and $\left ( \begin{smallmatrix}  0 & z^{a} \\ 0 & y \end{smallmatrix}\right)$ respectively. 
See Remark \ref{ex of 2-dim} below.

Suppose that $\mu = \left ( \begin{smallmatrix}  y & z^a \\ 0 & 0 \end{smallmatrix}\right)$ degenerates to $\left ( \begin{smallmatrix}  y & z^b \\ 0 & 0 \end{smallmatrix}\right )$ along $V$, and let $Q$ be a finitely generated $R\otimes _k V$-module which gives the degeneration.
By elementary row and column operations, we may assume that the matrix representation $\xi$ of $Q$ over $S\otimes _k V$ which gives the degeneration is of the form:
$$
\xi = \left( \begin{smallmatrix}  y & z^{b} \\ 0 & 0 \end{smallmatrix}\right) + t \left(\begin{smallmatrix}  \alpha & \beta \\ \gamma & \delta \end{smallmatrix} \right) = \left( \begin{smallmatrix}  y + t \alpha & z^{b} + t\beta \\ t\gamma & t\delta \end{smallmatrix}\right).
$$
Since $\xi \otimes V_t \cong \mu \otimes V_t$, by Corollary \ref{fitting}, we have
\begin{eqnarray}
\tr (\xi ) = y, \\  
\det ( \xi )= (y + t \alpha )t \delta  - t \gamma (z^{b} + t \beta ) = 0, \\ 
I_1 (\xi ) = (y + t\alpha , t \gamma , z^{b} + t \beta,  t \delta) \supseteq t^l (z^a, y) \text{ for some } l. 
\end{eqnarray}
By (3.4) and (3.5), we have 
\begin{eqnarray}
-\alpha (y + t \alpha)  =  \gamma (z^{b} + t \beta ). 
\end{eqnarray}
Set $P=z^b +t\beta$. 
Then, since $P \equiv z^{b}$ modulo $t$, $P$ divides $\alpha$. 
Consider the equation (3.7) in $S\otimes _k V/ y(S\otimes _k V)$: 
$$
-t \overline{\alpha}\cdot  \overline{\alpha}  =  \overline{\gamma } \cdot \overline{z^{b} + t \beta }. 
$$
Since $P$ divides $\alpha$, $\overline{P}$ divides $\overline{\alpha }$. 
Thus $\overline{P}$ also divides $\overline{\gamma }$. 
This yields that 
$
I_1 (\xi ) = (y + t\alpha , t \gamma , z^{b} + t \beta, - t \alpha) \subseteq (P, y).
$ 
By (3.6), 
$
t^l (z^a, y) \subseteq (P, y). 
$
Let $\p$ be a minimal prime ideal of $(P, y)$. 
Notice that $\p$ has height 2 and contains $t^l z$ and $y$. 
Suppose that $\p$ contains $t$. 
Then $\p$ also contains $z$ since $\p$ contains $P = z^b + t \beta$, so that $\p = ( z, y, t)$. 
This makes contradiction.  
Hence $\p = (z, y)$.   
Consider the above inclusion of ideals in $S\otimes _k V/ y(S\otimes _k V)$ again, and then we see that $\overline{(P, y)} = (\overline{P} ) \ni \overline{t^l z^a}$. 
Since $ (\overline{P} )$ are also contained in $\overline{\p} = (\overline{z})$, $ \overline{P} = \overline{P' z}$ for some $\overline{P'} \in  S\otimes _k V/ y(S\otimes _k V)$.  
Thus $(\overline{P} ) = (\overline{P' z}) \ni \overline{t^l z^a}$, which implies that $(\overline{P' } ) \ni \overline{t^l z^{a-1}}$ as an ideal in $S\otimes _k V/ y(S\otimes _k V)$. 
Note that $P' = z^{b-1} + t P''$ in $S\otimes _k V$ and the minimal prime ideal of $(\overline{P'})$ in $S\otimes _k V/ y(S\otimes _k V)$ is also $(\overline{z})$.  
Hence we can repeat the procedure and we obtain the inclusion of ideals in $S\otimes _k V/ y(S\otimes _k V)$:
$$
( \overline{z^{b-a} + t P'''}) \supseteq (\overline{t^l }).  
$$
However this never happen if $b-a>0$. 
Therefore we obtain the conclusion. 
The same proof is valid for showing the remaining claim that $(x, z^a)$ (resp. $(y, z^a)$ ) does not degenerate to $(y, z^b)$ (resp. $(x, z^b)$ or $(y, z^b)$).
\end{proof}

\begin{remark}\label{ex of 2-dim}
Let $R = k[\![x, y, z]\!]/(x^2-xy )$ and let $M=(x,z^n)$, $N=(x-y,z^n)$ be ideals. 
Set $S=k[\![y, z]\!]$. 
Note that the matrix representations of $M$ and $N$ over $S$ are obtained from the action of $x$ on $M$ and $N$ respectively. 
Since $M$ has a basis $x$ and $z^n$ as an $S$-module, that is, $M \cong xS \oplus z^n S$, the multiplication map $a_x$ by $x$ on $M$ induces the correspondence: 
$$
a_x : xS \oplus z^n S \to xS \oplus z^n S ; \quad \left( \begin{smallmatrix} ax \\ b z^n \end{smallmatrix}\right)   \mapsto \left( \begin{smallmatrix} ayx + bz^n x  \\ 0 \end{smallmatrix}\right). 
$$
Hence the matrix representation of $M$ over $S$ is $\left ( \begin{smallmatrix}  y & z^{n} \\ 0 & 0  \end{smallmatrix} \right )$.
Similarly, since $N \cong (x-y) S \oplus z^n S$ as an $S$-module, the multiplication map $b_x$ on $N$ induces
$$
b_x : (x-y)S \oplus z^n S \to (x-y) S \oplus z^n S ; \quad \left( \begin{smallmatrix} a(x-y) \\ b z^n \end{smallmatrix}\right)   \mapsto \left( \begin{smallmatrix} bz^n(x-y)  \\ byz^n \end{smallmatrix}\right),  
$$
so that the matrix representation of $N$ is $\left ( \begin{smallmatrix}  0 & z^{n} \\ 0 & y  \end{smallmatrix} \right )$.
\end{remark}

\begin{remark}\label{A type}
Araya, Iima and the second author \cite{AIT12} show that the Cohen-Macaulay modules which appear in Theorem \ref{odd dim} (resp. Theorem  \ref{even dim}) are obtained from the extension by $R/(x)$ and itself (resp. $R/(x)$ and $R/(y)$). 
In the case, we have the degeneration by Remark \ref{remark of degenerations 1}(2). 
Summing up this fact, we obtain complete description of the degenerations of indecomposable Cohen-Macaulay modules over $k[\![x, y]\!]/(x^2)$ and $k[\![x, y, z]\!]/(xy)$. 
\end{remark}

\section{Construction of degenerations of modules}\label{construction}

In this section we provide a construction of a nontrivial degeneration over an arbitrary commutative noetherian algebra over a field (without matrix representations).
We begin with establishing a lemma.

\begin{lemma}\label{7}
Let $R$ be a commutative ring, and let $\cdots\xrightarrow{\alpha}L\xrightarrow{\beta}L\xrightarrow{\alpha}L\xrightarrow{\beta}\cdots$ be an exact sequence of $R$-modules.
Let $x\in R$, and set $Z=\alpha(L)+xL$ and $W=\beta(L)+xL$.
\begin{enumerate}[\rm(1)]
\item
There is an exact sequence
$$
\xymatrix@R-1pc@C-1pc{\cdots\ar[rd]\ar[rr]^{\left(\begin{smallmatrix}\alpha&x\\
0&\beta\end{smallmatrix}\right)} && L\oplus L\ar[rr]^{\left(\begin{smallmatrix}\beta&-x\\
0&\alpha\end{smallmatrix}\right)}\ar[rd] && L\oplus L\ar[rd]\ar[rr]^{\left(\begin{smallmatrix}\alpha&x\\
0&\beta\end{smallmatrix}\right)} && L\oplus L\ar[rr]^{\left(\begin{smallmatrix}\beta&-x\\
0&\alpha\end{smallmatrix}\right)}\ar[rd] && \cdots\\
& Z\ar[ru] && W\ar[ru] && Z\ar[ru] && W\ar[ru]}.
$$
\item
Suppose that $x$ is $L$-regular.
Then there are exact sequences:
\begin{align*}
&L\oplus L\xrightarrow{\left(\begin{smallmatrix}\beta&-x\\
0&\alpha
\end{smallmatrix}\right)} L\oplus L \xrightarrow{\left(\begin{smallmatrix}\alpha&x\end{smallmatrix}\right)} L\to L/Z\to0,\\
&L\oplus L \xrightarrow{\left(\begin{smallmatrix}\alpha&x\\
0&\beta\end{smallmatrix}\right)} L\oplus L\xrightarrow{\left(\begin{smallmatrix}\beta&-x\end{smallmatrix}\right)} L\to L/W\to0.
\end{align*}
\end{enumerate}
\end{lemma}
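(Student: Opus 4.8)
The plan is to handle the two parts separately, in each case first confirming that the displayed sequence is a complex and then upgrading this to exactness by a diagram chase driven entirely by the exactness of the given two‑periodic sequence, i.e. by the equalities $\Im\alpha=\Ker\beta$ and $\Im\beta=\Ker\alpha$; in particular these give $\beta\alpha=0$ and $\alpha\beta=0$, which I will use repeatedly.

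For Part (1), write $A=\left(\begin{smallmatrix}\alpha&x\\0&\beta\end{smallmatrix}\right)$ and $B=\left(\begin{smallmatrix}\beta&-x\\0&\alpha\end{smallmatrix}\right)$. First I would compute the products $BA$ and $AB$ directly: since $\alpha,\beta$ are $R$-linear and $x\in R$, multiplication by $x$ commutes with both, so the off‑diagonal entries $\beta x-x\beta$ and $x\alpha-\alpha x$ cancel, while the diagonal entries are $\beta\alpha=\alpha\beta=0$; hence both composites vanish and the sequence is a complex. To prove exactness at a copy of $L\oplus L$ sitting between $A$ and $B$, I would chase a pair $(p,q)\in\Ker B$, which means $\beta p=xq$ and $\alpha q=0$. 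The relation $\alpha q=0$ and $\Ker\alpha=\Im\beta$ yield $q=\beta v$; then $\beta(p-xv)=\beta p-x\beta v=xq-xq=0$, so $p-xv\in\Ker\beta=\Im\alpha$ gives $p-xv=\alpha u$, and hence $(p,q)=A(u,v)\in\Im A$. Interchanging the roles of $\alpha$ and $\beta$ (and carrying the sign of $x$) gives the symmetric inclusion $\Ker A\subseteq\Im B$. The diagonal arrows through $Z$ and $W$ simply record the epi–mono factorizations of the two differentials, whose images are generated by the first‑coordinate combinations $\alpha u+xv\in Z$ and $\beta p-xq\in W$.

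For Part (2), I would treat the first four‑term sequence; the second is obtained by the same symmetry. Exactness at $L/Z$ is the surjectivity of the quotient map, and exactness at the right‑hand $L$ holds because $\Im\left(\begin{smallmatrix}\alpha&x\end{smallmatrix}\right)=\alpha(L)+xL=Z$ is precisely the kernel of $L\to L/Z$. The substantive point is exactness at the middle $L\oplus L$, i.e. $\Ker\left(\begin{smallmatrix}\alpha&x\end{smallmatrix}\right)=\Im B$. One inclusion is immediate from Part (1): $\left(\begin{smallmatrix}\alpha&x\end{smallmatrix}\right)$ is the top row of $A$, so $\left(\begin{smallmatrix}\alpha&x\end{smallmatrix}\right)B$ is the top row of $AB=0$ and vanishes, giving $\Im B\subseteq\Ker\left(\begin{smallmatrix}\alpha&x\end{smallmatrix}\right)$. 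For the reverse inclusion, take $(u,v)$ with $\alpha u+xv=0$; here is where $L$-regularity of $x$ enters. Applying $\beta$ kills $\alpha u$ and leaves $x\beta v=0$, so regularity forces $\beta v=0$; then $v\in\Ker\beta=\Im\alpha$ gives $v=\alpha q$, and substituting back yields $\alpha(u+xq)=0$, whence $u+xq\in\Ker\alpha=\Im\beta$ produces $u=\beta p-xq$, i.e. $(u,v)=B(p,q)$.

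The step I expect to be the main obstacle is the reverse inclusion in each exactness statement, since the complex property is purely formal. The crux is the device of hitting the defining relation with the complementary map ($\beta$ against a term containing $\alpha$, or vice versa) to produce a relation of the form $x\cdot(\text{something})=0$; in Part (1) this collapses at once, while in Part (2) it is exactly the $L$-regularity of $x$ that lets one cancel $x$ and re‑enter the original exact sequence. Keeping the two coordinates and the sign of $x$ straight through the chases is the only real bookkeeping subtlety.
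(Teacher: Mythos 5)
Your proposal is correct and follows essentially the same route as the paper: verify the complex property by the matrix products, then chase a kernel element using $\Ker\alpha=\Im\beta$ and $\Ker\beta=\Im\alpha$, and in part (2) apply $\beta$ (resp.\ $\alpha$) to the relation $\alpha(s)+xt=0$ and cancel $x$ by $L$-regularity to reduce to the chase of part (1). The only cosmetic difference is which of the two symmetric inclusions in part (1) you carry out explicitly.
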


\begin{proof}
(1) Clearly, $\left(\begin{smallmatrix}\alpha&x\\
0&\beta\end{smallmatrix}\right)
\left(\begin{smallmatrix}\beta&-x\\
0&\alpha
\end{smallmatrix}\right)=0$.
Let $s,t\in L$ with $\left(\begin{smallmatrix}\alpha&x\\
0&\beta\end{smallmatrix}\right)\left(\begin{smallmatrix}s\\
t\end{smallmatrix}\right)=\left(\begin{smallmatrix}0\\
0\end{smallmatrix}\right)$.
Then $\alpha(s)+xt=0=\beta(t)$, and $t\in\ker\beta=\Im\alpha$.
We have $t=\alpha(u)$ for some $u\in L$, and $0=\alpha(s)+xt=\alpha(s+xu)$, which implies $s+xu\in\ker\alpha=\Im\beta$.
Writing $s+xu=\beta(v)$ with $v\in L$, we get $\left(\begin{smallmatrix}s\\
t\end{smallmatrix}\right)=\left(\begin{smallmatrix}\beta(v)-xu\\
\alpha(u)\end{smallmatrix}\right)=\left(\begin{smallmatrix}\beta&-x\\
0&\alpha
\end{smallmatrix}\right)\left(\begin{smallmatrix}v\\
u\end{smallmatrix}\right)$.
Thus $\Im\left(\begin{smallmatrix}\beta&-x\\
0&\alpha
\end{smallmatrix}\right)=\ker\left(\begin{smallmatrix}\alpha&x\\
0&\beta\end{smallmatrix}\right)$.
A symmetric argument shows $\Im \left(\begin{smallmatrix}\alpha&x\\
0&\beta\end{smallmatrix}\right)=\ker\left(\begin{smallmatrix}\beta&-x\\
0&\alpha
\end{smallmatrix}\right)$.

(2) Note that $\left(\begin{smallmatrix}
\alpha&x\end{smallmatrix}\right)$ is a submatrix of $\left(\begin{smallmatrix}\alpha&x\\
0&\beta\end{smallmatrix}\right)$; we have $\left(\begin{smallmatrix}\alpha&x\end{smallmatrix}\right)\left(\begin{smallmatrix}\beta&-x\\
0&\alpha
\end{smallmatrix}\right)=0$.
Let $s,t\in L$ with $\left(\begin{smallmatrix}\alpha&x\end{smallmatrix}\right)\left(\begin{smallmatrix}s\\
t\end{smallmatrix}\right)=\left(\begin{smallmatrix}0\\
0\end{smallmatrix}\right)$.
Then $\alpha(s)+xt=0$, and $0=\beta(\alpha(s)+xt)=x\beta(t)$.
Since $x$ is $L$-regular, $\beta(t)=0$.
The above argument shows $\left(\begin{smallmatrix}s\\
t\end{smallmatrix}\right)\in\Im \left(\begin{smallmatrix}\beta&-x\\
0&\alpha
\end{smallmatrix}\right)$.
Thus $\Im \left(\begin{smallmatrix}\beta&-x\\
0&\alpha
\end{smallmatrix}\right)=\ker\left(\begin{smallmatrix}
\alpha&x\end{smallmatrix}\right)$.
A symmetric argument shows $\Im \left(\begin{smallmatrix}\alpha&x\\
0&\beta\end{smallmatrix}\right)=\ker\left(\begin{smallmatrix}
\beta&-x\end{smallmatrix}\right)$.
\end{proof}

Recall that a finitely generated module $M$ over a commutative noetherian ring $R$ is Cohen-Macaulay if $\depth_{R_\p}M_\p\ge\dim R_\p$ for all prime ideals $\p$ of $R$. 
The main result of this section is the following.

\begin{theorem}\label{1}
Let $R$ be a commutative ring, and let
$
\cdots\xrightarrow{\alpha}L\xrightarrow{\beta}L\xrightarrow{\alpha}L\xrightarrow{\beta}\cdots
$
be an exact sequence of $R$-modules.
Let $x$ be an $L$-regular element such that $\beta(L)\subseteq\alpha(L)+xL=:Z$.
Let $M$ be an $R$-module with $Z\subseteq M\subseteq L$, and set $N=\beta(M)+xZ$.
\begin{enumerate}[\rm(1)]
\item
The sequence
$$
0 \to Z \xrightarrow{\left(\begin{smallmatrix}\theta\\
\eta\end{smallmatrix}\right)} M\oplus Z \xrightarrow{\left(\begin{smallmatrix}\beta&-x\end{smallmatrix}\right)} N \to 0
$$
is exact, where $\theta$ is the inclusion map and $\eta(\alpha(s)+xt)=\beta(t)$ for $s,t\in L$.
\item
Suppose that $R$ contains a field, $Z,M$ are finitely generated and $\beta$ is nilpotent.
Then $M$ degenerates to $N$.
\item
Assume $R$ is noetherian.
If $L,M$ are Cohen-Macaulay, then so are $Z,N$.
\end{enumerate}
\end{theorem}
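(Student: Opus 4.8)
The plan is to establish the three parts in order, using Lemma \ref{7} throughout and, for part (3), the depth inequalities attached to a short exact sequence (the depth lemma).

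For part (1) I would first verify that $\eta$ is well defined: if $\alpha(s)+xt=\alpha(s')+xt'$, then $\alpha(s-s')=x(t'-t)$, and applying $\beta$ gives $0=\beta\alpha(s-s')=x\beta(t'-t)$, so $\beta(t)=\beta(t')$ since $x$ is $L$-regular; moreover $\eta(Z)\subseteq\beta(L)\subseteq Z$, so $\eta$ is an endomorphism of $Z$. The composite $\left(\begin{smallmatrix}\beta&-x\end{smallmatrix}\right)\left(\begin{smallmatrix}\theta\\\eta\end{smallmatrix}\right)$ vanishes, $\left(\begin{smallmatrix}\theta\\\eta\end{smallmatrix}\right)$ is injective because $\theta$ is, and $\left(\begin{smallmatrix}\beta&-x\end{smallmatrix}\right)$ is onto $N=\beta(M)+xZ$ by definition. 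The only substantial point is exactness in the middle: given $(m,z)\in M\oplus Z$ with $\beta(m)=xz$, the second exact sequence of Lemma \ref{7}(2) gives $\ker\left(\begin{smallmatrix}\beta&-x\end{smallmatrix}\right)=\Im\left(\begin{smallmatrix}\alpha&x\\0&\beta\end{smallmatrix}\right)$, so $m=\alpha(u)+xv$ and $z=\beta(v)$ for some $u,v\in L$; then $m\in Z$ and $z=\beta(v)=\eta(m)$, whence $(m,z)=\left(\begin{smallmatrix}\theta\\\eta\end{smallmatrix}\right)(m)$.

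For part (2), by the Riedtmann--Zwara criterion recalled in Remark \ref{remark of degenerations 1}(1) it is enough to show that the endomorphism $\eta$ of $Z$ in the sequence of (1) is nilpotent. The key identity is $\beta(z)=x\eta(z)$ for every $z=\alpha(s)+xt\in Z$ (using $\beta\alpha=0$), that is, $\beta|_Z=x\eta$. Since $x$ commutes with $\eta$, this yields $\beta^n|_Z=x^n\eta^n$; if $\beta^n=0$ then $x^n\eta^n=0$ on $Z$, and as $x$ is regular on $Z\subseteq L$ we conclude $\eta^n=0$. After descending the finitely many data to a finitely generated (hence noetherian) subalgebra over the ground field if $R$ is not already noetherian, Remark \ref{remark of degenerations 1}(1) then gives the degeneration of $M$ to $N$.

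The first half of part (3) is to prove that $Z$ and $W:=\beta(L)+xL$ are Cohen--Macaulay. Here I would exploit Lemma \ref{7}(1): the matrices $\left(\begin{smallmatrix}\alpha&x\\0&\beta\end{smallmatrix}\right)$ and $\left(\begin{smallmatrix}\beta&-x\\0&\alpha\end{smallmatrix}\right)$ form a $2$-periodic exact complex of copies of $L\oplus L$, whose two syzygy modules $Y=\Im\left(\begin{smallmatrix}\beta&-x\\0&\alpha\end{smallmatrix}\right)$ and $Y'=\Im\left(\begin{smallmatrix}\alpha&x\\0&\beta\end{smallmatrix}\right)$ fit into the reciprocal pair $0\to Y'\to L\oplus L\to Y\to0$ and $0\to Y\to L\oplus L\to Y'\to0$. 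As $L\oplus L$ is Cohen--Macaulay, feeding this pair into the depth lemma forces both $Y$ and $Y'$ to be Cohen--Macaulay. Finally Lemma \ref{7}(2) identifies $Z\cong(L\oplus L)/Y\cong Y'$ and $W\cong(L\oplus L)/Y'\cong Y$, so $Z$ and $W$ are Cohen--Macaulay.

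The remaining assertion that $N$ is Cohen--Macaulay is the step I expect to be the main obstacle. The sequence of part (1) only presents $N$ as a cokernel of Cohen--Macaulay modules, which yields $\depth N_\p\ge\dim R_\p-1$ and is one short of the goal. To recover the missing unit of depth I would instead realize $N$ as a submodule: one has $N\subseteq W$, and, interposing $Z_2:=\beta(L)+xZ$, a direct computation with the modular law and the identity $\beta(L)\cap xL=x\beta(L)$ (together with $xL\subseteq M$ and $\beta(L)\subseteq Z$) gives $Z_2/N\cong L/M$ and $W/Z_2\cong L/Z$. Both $L/M$ and $L/Z$ have depth at least $\dim R_\p-1$ locally because $L,M,Z$ are Cohen--Macaulay, hence so does $W/N$; the depth lemma applied to $0\to N\to W\to W/N\to0$ with $W$ Cohen--Macaulay then gives $\depth N_\p\ge\dim R_\p$, so $N$ is Cohen--Macaulay. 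The point to get right is the choice of the ambient module $W$, rather than the more obvious $Z$, so that the successive quotients are governed by $L/M$ and $L/Z$.
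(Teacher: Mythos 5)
Your proof is correct. Parts (1) and (2) follow the paper's own argument essentially verbatim: well-definedness of $\eta$ from $\beta\alpha=0$ and $L$-regularity of $x$, middle exactness via Lemma \ref{7}, and nilpotency of $\eta$ from the identity $\beta|_Z=x\eta$ together with regularity of $x^n$. (Your aside about descending to a noetherian subalgebra addresses a hypothesis the paper leaves implicit; the paper simply invokes Remark \ref{remark of degenerations 1}(1).) Where you genuinely diverge is the Cohen--Macaulayness of $N$ in part (3). The paper splices the four-term sequence from (1) against the corresponding sequence for $L\oplus L$ from Lemma \ref{7}, and the snake lemma produces $0\to L/M\oplus L/Z\to L/N\to L/W\to 0$; since the outer terms have depth at least $d-1$, so does $L/N$, and then $0\to N\to L\to L/N\to 0$ finishes. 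You instead realize $N$ inside $W$ via the chain $N\subseteq Z_2:=\beta(L)+xZ\subseteq W$ and identify $Z_2/N\cong L/M$ and $W/Z_2\cong L/Z$; I checked that both isomorphisms hold, the essential inputs being $\beta(L)\cap xL=x\beta(L)$ (from exactness and regularity of $x$) and $\Im\alpha+xL\subseteq M$. The two routes use the same three quotients $L/M$, $L/Z$, $L/W$ (your filtration of $L/N$ by $W/N$ refines the paper's), and the same depth-lemma arithmetic; the paper's version buys a shorter, diagram-chase-free write-up once the commutative ladder is drawn, while yours avoids the snake lemma at the cost of two explicit intersection computations. Both are complete proofs.
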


\begin{proof}
(1) As the image of $\beta$ is contained in $Z$, so is that of $\eta$.
Take any element $z\in Z$.
Then $z=\alpha(s)+xt$ for some $s,t\in L$, and we have $\beta(z)=\beta(\alpha(s)+xt)=x\beta(t)$.
Since $x$ is $L$-regular and $\beta(t)\in\beta(L)\subseteq Z$, the assignment $z\mapsto \beta(t)$ gives a map $\eta:Z\to Z$, and
\begin{equation}\label{5}
\beta(z)=x\eta(z)\text{ for all }z\in Z.
\end{equation}
As $\theta$ is injective, so is the map $\left(\begin{smallmatrix}\theta\\
\eta\end{smallmatrix}\right)$, whose image is contained in the kernel of the map $\left(\begin{smallmatrix}\beta&-x\end{smallmatrix}\right)$ because $\left(\begin{smallmatrix}\beta&-x\end{smallmatrix}\right)\left(\begin{smallmatrix}\theta\\
\eta\end{smallmatrix}\right)(\alpha(s)+xt)=\left(\begin{smallmatrix}\beta&-x\end{smallmatrix}\right)\left(\begin{smallmatrix}\alpha(s)+xt\\
\beta(t)
\end{smallmatrix}\right)=\beta(xt)-x\beta(t)=0$.
The definition of $N$ shows that $\left(\begin{smallmatrix}\beta&-x\end{smallmatrix}\right)$ is surjective.

It remains to show that the kernel of $\left(\begin{smallmatrix}\beta&-x\end{smallmatrix}\right)$ is contained in the image of $\left(\begin{smallmatrix}\theta\\\eta\end{smallmatrix}\right)$.
Let $\left(\begin{smallmatrix}m\\z\end{smallmatrix}\right)\in M\oplus Z$ such that $\left(\begin{smallmatrix}\beta&-x\end{smallmatrix}\right)\left(\begin{smallmatrix}m\\z\end{smallmatrix}\right)=0$.
Lemma \ref{7} implies that there exist $s,t\in L$ such that $\left(\begin{smallmatrix}m\\z\end{smallmatrix}\right)=\left(\begin{smallmatrix}\alpha&x\\
0&\beta\end{smallmatrix}\right)\left(\begin{smallmatrix}s\\t\end{smallmatrix}\right)=\left(\begin{smallmatrix}\alpha(s)+xt\\\beta(t)\end{smallmatrix}\right)=\left(\begin{smallmatrix}\theta\\\eta\end{smallmatrix}\right)(\alpha(s)+xt)$.
Thus we are done.

(2) The module $N$ is finitely generated, and $\beta^n=0$ for some $n>0$.
It follows by \eqref{5} that $0=\beta^n(z)=(x\eta)^n(z)=x^n\eta^n(z)$ for all $z\in Z$.
As $x^n$ is $L$-regular, we get $\eta^n(z)=0$, that is, $\eta$ is nilpotent.
We see from (1) and Remark \ref{remark of degenerations 1}(1) that $M$ degenerates to $N$.

(3) Set $W=\beta(L)+xL$.
Combining (1) and Lemma \ref{7}, we get a commutative diagram
$$
\begin{CD}
0 @>>> Z @>>> L\oplus L @>{\left(\begin{smallmatrix}\beta&-x\end{smallmatrix}\right)}>> L @>>> L/W @>>> 0\\
@. @| @A{\left(\begin{smallmatrix}i&0\\
0&j\end{smallmatrix}\right)}AA @|  \\
0 @>>> Z @>{\left(\begin{smallmatrix}\theta\\
\eta\end{smallmatrix}\right)}>> M\oplus Z @>{\left(\begin{smallmatrix}\beta&-x\end{smallmatrix}\right)}>> L @>>> L/N @>>> 0
\end{CD}
$$
with exact rows, where $i,j$ are inclusion maps.
The snake lemma gives an exact sequence
\begin{equation}\label{6}
0 \to L/M\oplus L/Z \to L/N \to L/W \to 0.
\end{equation}

From now on, assuming that $R$ is noetherian, we show that if $L,M$ are Cohen-Macaulay, then so are $Z,N$.
For this, we may assume that $R$ is a local ring of Krull dimension $d$.
The first exact sequence in Lemma \ref{7} shows that $Z,W$ are Cohen-Macaulay.
The depth lemma implies that the $R$-modules $L/M,L/Z,L/W$ have depth at least $d-1$.
It follows from \eqref{6} that $L/N$ also has depth at least $d-1$, which implies that $N$ is Cohen-Macaulay.
\end{proof}

As a direct corollary of the theorem, we obtain the following result.

\begin{corollary}\label{3}
Let $R$ be a commutative noetherian ring containing a field, and let $L$ be a finitely generated $R$-module.
Let $\alpha$ be an endomorphism of $L$ with $\Im \alpha=\ker\alpha$, and let $x\in R$ be an $L$-regular element.
Then the following statements hold.
\begin{enumerate}[\rm(1)]
\item
An $R$-module $M$ with $\alpha(L)+xL\subseteq M\subseteq L$ degenerates to $N:=\alpha(M)+x^2L$.
\item
If $L,M$ are Cohen-Macaulay, then so is $N$.
\end{enumerate}
\end{corollary}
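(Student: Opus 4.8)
The plan is to derive the corollary by specializing Theorem \ref{1} to the case $\beta=\alpha$. First I would observe that the hypothesis $\Im\alpha=\ker\alpha$ is exactly what makes the doubly infinite complex
$$
\cdots\xrightarrow{\alpha}L\xrightarrow{\alpha}L\xrightarrow{\alpha}L\xrightarrow{\alpha}\cdots
$$
exact, so the standing setup of Theorem \ref{1} applies with both maps taken to be $\alpha$. With this choice one has $Z=\alpha(L)+xL$, so the inclusions $\alpha(L)+xL\subseteq M\subseteq L$ of the corollary are precisely $Z\subseteq M\subseteq L$, and the condition $\beta(L)\subseteq Z$ required by Theorem \ref{1} reads $\alpha(L)\subseteq\alpha(L)+xL$, which holds trivially. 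Thus the entire hypothesis package of Theorem \ref{1} is in force.

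The one point that needs genuine checking is that the module $N=\beta(M)+xZ$ produced by Theorem \ref{1} coincides with $N=\alpha(M)+x^2L$ in the corollary. Expanding gives $N=\alpha(M)+x(\alpha(L)+xL)=\alpha(M)+x\alpha(L)+x^2L$, so it suffices to absorb the middle term. Here I would use that $xL\subseteq\alpha(L)+xL\subseteq M$, whence $x\alpha(L)=\alpha(xL)\subseteq\alpha(M)$; this collapses $N$ to $\alpha(M)+x^2L$ as desired. I expect this identification to be the main (and essentially the only) obstacle, since everything else is formal bookkeeping.

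It then remains to verify the extra hypotheses of parts (2) and (3) of Theorem \ref{1}. Nilpotency of $\beta=\alpha$ is immediate, as $\Im\alpha\subseteq\ker\alpha$ forces $\alpha^2=0$. Finite generation of $Z$ and $M$ follows because $R$ is noetherian and both are submodules of the finitely generated module $L$, while $R$ contains a field by assumption. Consequently Theorem \ref{1}(2) yields that $M$ degenerates to $N$, which is part (1), and Theorem \ref{1}(3) shows that $Z$ and $N$ are Cohen-Macaulay whenever $L$ and $M$ are, which gives part (2).
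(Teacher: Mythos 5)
Your proposal is correct and follows essentially the same route as the paper: specialize Theorem \ref{1} with $\beta=\alpha$, note $\alpha^2=0$, and use $xL\subseteq M$ to absorb $x\alpha(L)=\alpha(xL)$ into $\alpha(M)$ so that $\beta(M)+xZ$ collapses to $\alpha(M)+x^2L$. The additional bookkeeping you supply (exactness of the constant-$\alpha$ complex, finite generation of $Z$) is accurate and matches what the paper leaves implicit.
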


\begin{proof}
Note $\alpha^2=0$.
Letting $\alpha=\beta$ in Theorem \ref{1} shows that $M$ degenerates to $\alpha(M)+x\alpha(L)+x^2L$, and if the former is Cohen-Macaulay, then so is the latter.
As $xL\subseteq M$, we have $x\alpha(L)=\alpha(xL)\subseteq\alpha(M)$.
Hence $\alpha(M)+x\alpha(L)+x^2L=N$.
\end{proof}


From now on, we give applications of Corollary \ref{3}.

\begin{corollary}\label{8}
Let $R$ be a commutative noetherian ring containing a field, and let $\cdots \xrightarrow{\alpha} L \xrightarrow{\beta}L \xrightarrow{\alpha} L \xrightarrow{\beta} \cdots$ be an exact sequence of $R$-modules.
Let $x$ be an $L$-regular element, and set $Z=\alpha(L)+xL$ and $W=\beta(L)+xL$.
Let $M, N$ be $R$-submodules of $L$ containing $Z, W$ respectively.
Then $M\oplus N$ degenerates to $K:=(\alpha(N)+x^2L)\oplus(\beta(M)+x^2L)$. 
If $L, M$ and $N$ are Cohen-Macaulay, then so is $K$.
\end{corollary}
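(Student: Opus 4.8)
The plan is to deduce this from a single application of Corollary \ref{3}, after repackaging the two maps $\alpha,\beta$ as one endomorphism of $L\oplus L$. First I would put $L'=L\oplus L$ and define $\gamma\colon L'\to L'$ by the block matrix $\left(\begin{smallmatrix}0&\alpha\\\beta&0\end{smallmatrix}\right)$, so that $\gamma(u,v)=(\alpha(v),\beta(u))$ for $u,v\in L$. The crucial point is that the hypothesis $\Im\gamma=\ker\gamma$ of Corollary \ref{3} holds: the exactness of the given two-periodic sequence says exactly $\Im\alpha=\ker\beta$ and $\Im\beta=\ker\alpha$, and a direct computation gives $\Im\gamma=\Im\alpha\oplus\Im\beta$ together with $\ker\gamma=\ker\beta\oplus\ker\alpha$, so the two submodules coincide. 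Since $x$ is $L$-regular it is clearly $L'$-regular, so the triple $(L',\gamma,x)$ satisfies all the hypotheses of Corollary \ref{3}.

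Next I would identify the relevant submodules. A coordinatewise computation gives $\gamma(L')+xL'=(\Im\alpha+xL)\oplus(\Im\beta+xL)=Z\oplus W$. Hence the submodule $M':=M\oplus N$ of $L'$ satisfies $\gamma(L')+xL'\subseteq M'\subseteq L'$, precisely because $Z\subseteq M$ and $W\subseteq N$. Corollary \ref{3}(1) then tells me that $M'$ degenerates to $N':=\gamma(M')+x^2L'$. Computing $\gamma(M\oplus N)=\alpha(N)\oplus\beta(M)$ and $x^2L'=x^2L\oplus x^2L$, I obtain $N'=(\alpha(N)+x^2L)\oplus(\beta(M)+x^2L)=K$, which is exactly the asserted degeneration of $M\oplus N$ to $K$.

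For the Cohen-Macaulay statement, if $L,M,N$ are Cohen-Macaulay then $L'=L\oplus L$ and $M'=M\oplus N$ are finite direct sums of Cohen-Macaulay modules, hence Cohen-Macaulay; Corollary \ref{3}(2) then yields that $N'=K$ is Cohen-Macaulay.

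I expect no substantial obstacle here, since once $\gamma$ is written down the argument reduces entirely to bookkeeping. The one step that genuinely needs both hypotheses simultaneously is the verification $\Im\gamma=\ker\gamma$, where the two exactness conditions $\Im\alpha=\ker\beta$ and $\Im\beta=\ker\alpha$ must be combined; the remaining work is just matching direct summands coordinatewise. It is worth noting that a naive attempt to apply Corollary \ref{3} to $\alpha$ alone fails, because in general $\Im\alpha=\ker\beta\neq\ker\alpha$, so the passage to the block endomorphism on $L\oplus L$ is exactly what makes the construction go through.
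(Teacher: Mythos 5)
Your proposal is correct and follows essentially the same route as the paper: the paper likewise forms the block endomorphism $\gamma=\left(\begin{smallmatrix}0&\alpha\\\beta&0\end{smallmatrix}\right)$ on $L^{\oplus2}$, observes $\Im\gamma=\ker\gamma$ and $\gamma(L^{\oplus2})+xL^{\oplus2}\subseteq M\oplus N$, and applies Corollary \ref{3} to identify the degeneration target with $K$. Your coordinatewise verifications match the paper's proof in every essential step.
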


\begin{proof}
The endomorphism $\gamma=\left(\begin{smallmatrix}0&\alpha\\\beta&0\end{smallmatrix}\right):L^{\oplus2}\to L^{\oplus2}$ is such that $\Im \gamma=\ker\gamma$.
Putting $X=\{\left(\begin{smallmatrix}m\\n\end{smallmatrix}\right)\mid m\in M,\,n\in N\}$, we have $\gamma(L^{\oplus2})+xL^{\oplus2}\subseteq X\subseteq L^{\oplus2}$.
Note that $X\cong M\oplus N$ and $\gamma(X)+x^2L^{\oplus2}\cong K$.
Now the assertion follows from Corollary \ref{3}.
\end{proof}

\begin{remark}
Let $R$ be a hypersurface local ring.
Let $X$ be a Cohen-Macaulay $R$-module.
Then $X$ has a free resolution of the form $\cdots\xrightarrow{\alpha}F\xrightarrow{\beta}F\xrightarrow{\alpha}F\to X \to 0$, and one can apply Corollary \ref{8} to $L=F$.
\end{remark}

\begin{corollary}\label{10}
Let $R$ be a commutative noetherian ring containing a field and $L$ a finitely generated $R$-module.
Let $\alpha$ be an endomorphism of $L$ with $\Im \alpha=\ker\alpha$, and let $x\in R$ be an $L$-regular element.
Then $\alpha(L)+x^j L$ degenerates to $\alpha(L)+x^{2i-j}L$ for all $i\ge j\ge0$. 
If $L$ is Cohen-Macaulay, then so are $\alpha(L)+x^jL$ and $\alpha(L)+x^{2i-j}L$.
\end{corollary}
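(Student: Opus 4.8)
The plan is to deduce this from Corollary \ref{3} by running that result over the $L$-regular element $x^i$ instead of $x$, and then to identify the resulting module up to isomorphism. First I would record that $\alpha^2=0$: since $\Im\alpha=\ker\alpha$, every element of $\alpha(L)$ lies in $\ker\alpha$, so $\alpha(\alpha(L))=0$. Next, fix $i\ge j\ge0$ and note that $x^i$ is again $L$-regular. Setting $M=\alpha(L)+x^jL$, the inequality $j\le i$ gives $x^iL\subseteq x^jL$, whence $\alpha(L)+x^iL\subseteq M\subseteq L$, so the hypotheses of Corollary \ref{3} are satisfied with $x$ replaced by $x^i$.

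Applying Corollary \ref{3}(1) in this form, $M=\alpha(L)+x^jL$ degenerates to $N:=\alpha(M)+x^{2i}L$, and I would then compute $N$ explicitly. Using $\alpha^2=0$ together with the $R$-linearity of $\alpha$, one gets $\alpha(M)=\alpha(\alpha(L))+\alpha(x^jL)=x^j\alpha(L)$, so that $N=x^j\alpha(L)+x^{2i}L=x^j\bigl(\alpha(L)+x^{2i-j}L\bigr)$. Since $x^j$ is $L$-regular, multiplication by $x^j$ is an injective $R$-endomorphism of $L$, and hence restricts to an isomorphism $\alpha(L)+x^{2i-j}L\xrightarrow{\sim}x^j\bigl(\alpha(L)+x^{2i-j}L\bigr)=N$. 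As degeneration depends only on the isomorphism class of the target, it follows that $\alpha(L)+x^jL$ degenerates to $\alpha(L)+x^{2i-j}L$, which is the first assertion.

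For the Cohen-Macaulay claim it suffices to show that a module of the form $\alpha(L)+x^mL$ is Cohen-Macaulay whenever $L$ is, and then to apply this to $m=j$ and $m=2i-j$ (both nonnegative, and $x^m$ is $L$-regular). This is exactly the Cohen-Macaulayness of the module $Z$ established in Theorem \ref{1}(3): taking $\beta=\alpha$ and replacing $x$ by $x^m$, the first exact sequence of Lemma \ref{7} shows, as in the proof of Theorem \ref{1}(3), that $\alpha(L)+x^mL$ is Cohen-Macaulay. Alternatively, the Cohen-Macaulayness of $\alpha(L)+x^{2i-j}L$ can be read off from Corollary \ref{3}(2) through the isomorphism with $N$ found above.

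The verification of the hypotheses and the computation of $N$ are routine. The step carrying the actual content — and the one I expect to be the main obstacle — is recognizing that the correct regular element to feed into Corollary \ref{3} is the power $x^i$, and that its output $\alpha(M)+x^{2i}L$ is not literally $\alpha(L)+x^{2i-j}L$ but rather $x^j\bigl(\alpha(L)+x^{2i-j}L\bigr)$. Passing between these two is precisely the isomorphism induced by multiplication by $x^j$; it is here that $L$-regularity of $x$ is essential, and it is at this point that one invokes the invariance of degeneration under isomorphism of the target.
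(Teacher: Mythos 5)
Your proposal is correct and follows essentially the same route as the paper: apply Corollary \ref{3} with the $L$-regular element $x^i$ to the module $M=\alpha(L)+x^jL$, compute the resulting degeneration target as $x^j\alpha(L)+x^{2i}L=x^j\bigl(\alpha(L)+x^{2i-j}L\bigr)\cong\alpha(L)+x^{2i-j}L$ via multiplication by $x^j$, and obtain the Cohen--Macaulayness of $\alpha(L)+x^mL$ from the exact sequence of Lemma \ref{7}. The paper's proof is the same argument, only phrased more tersely.
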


\begin{proof}
We have $\alpha(L)+x^iL\subseteq\alpha(L)+x^jL\subseteq L$.
Corollary \ref{3} implies that $\alpha(L)+x^jL$ degenerates to $\alpha(\alpha(L)+x^jL)+x^{2i}L=x^j\alpha(L)+x^{2i}L\cong\alpha(L)+x^{2i-j}L$, where the isomorphism follows from the fact that $x^j$ is $L$-regular.
Fix $h\ge0$.
Lemma \ref{7} implies that $C:=\alpha(L)+x^h L$ is isomorphic to the cokernel of $\left(\begin{smallmatrix}\alpha&-x^h\\0&\alpha\end{smallmatrix}\right)$, and that the sequence $\cdots\xrightarrow{\left(\begin{smallmatrix}\alpha&-x^h\\0&\alpha\end{smallmatrix}\right)}L^2\xrightarrow{\left(\begin{smallmatrix}\alpha&x^h\\0&\alpha\end{smallmatrix}\right)}L^2\xrightarrow{\left(\begin{smallmatrix}\alpha&-x^h\\0&\alpha\end{smallmatrix}\right)}\cdots$ is exact.
Using this exact sequence, we easily observe that for each prime ideal $\p$ of $R$, an $L_\p$-sequence is a $C_\p$-sequence.
Thus, if $L$ is Cohen-Macaulay, then so is $C$.
\end{proof}

We present a couple of examples.
For such an element $x$ as in the second statement it is said that $[x,x]$ is an exact pair of zerodivisors.

\begin{example}\label{2}
Let $R$ be a commutative noetherian ring containing a field. \\
(1) Let $L$ be a finitely generated $R$-module having a submodule $C$ such that $L/C\cong C$.
Let $x\in R$ be an $L$-regular element.
Then, for all $i\ge j\ge0$, $C+x^jL$ degenerates to $C+x^{2i-j}L$. \\
(2) Let $x\in R$ be such that $(0:x)=(x)$, and let $y\in R$ be a non-zerodivisor.
Then for all integers $i\ge j\ge0$ $(x,y^j)$ degenerates to $(x,y^{2i-j})$, where $(x,y^j)$ and $(x,y^{2i-j})$ are ideals of $R$. 
\end{example}

\begin{proof}
The composition $\alpha:L\twoheadrightarrow L/C\cong C\hookrightarrow L$ satisfies $\Im \alpha=\ker\alpha$.
The first assertion is shown by Corollary \ref{10}.
The second assertion follows from the first one.
\end{proof}

\begin{remark}
Example \ref{2}(2) immediately recovers the degeneration obtained in \cite[Remark 2.5]{HY13} and the `if' part of Theorem \ref{odd dim}. 
\end{remark}

\section{Remarks on degenerations over hypersurface rings}\label{Knorrer}

In this section, we consider extending Theorem \ref{a} to higher dimension.
Let $k$ be an algebraically closed field of characteristic not two and let $R=S/(f)$ be a hypersurface, where $S = k[\![x_0, x_1, \cdots , x_n]\!]$ is a formal power series ring with maximal ideal $\m _S = (x_0, x_1, \cdots , x_n )$ and $f \in \m _S$. 
We define $\Rs = S[\![u ]\!] /(f + u^2)$ and $\Rss = S[\![u, v]\!]/(f + u^2 + v^2)$.
In what follows, for a certain hypersurface $R$, we investigate the connection among the degenerations of Cohen-Macaulay modules over $R$, $\Rs$ and $\Rss$.

\begin{remark}\label{syzygy}
Let $R = S[\![y]\!]/(y^2 + f )$ with $f \in \m _S$.\\
(1) There is a one-to-one correspondence between
\begin{itemize}
\item
the isomorphism classes of Cohen-Macaulay $R$-modules, and
\item
the equivalence classes of square matrices $\varphi$ with entries in $S$ such that $\varphi ^2 = -f$.
\end{itemize}
This follows by considering matrix representations over $S$.\\
(2) Let $M$ be a Cohen-Macaulay $R$-module, and let $\Omega M$ be the first syzygy of $M$ with respect to the minimal $R$-free resolution of $M$.
As remarked in \cite[Lemma 12.2]{Y}, the matrix representation of $\Omega M$ over $S$ is $-\mu$, where $\mu$ is that of $M$.\\
\end{remark}

Let $R$ be as in the above remark.
Let $\mu$ be a square matrix with entries in $S$ such that $\mu ^2 = -f$ and $M$ a Cohen-Macaulay $R$-module which corresponds to $\mu$. 
Consider the matrices $\mu ^{\sharp} = \left( \begin{smallmatrix} \mu & u \\ -u & -\mu \end{smallmatrix}\right)$ and $\mu ^{\sharp \sharp}= \left( \begin{smallmatrix} \mu & \zeta \\ -\eta & -\mu \end{smallmatrix}\right)$, where $\zeta = u + \sqrt{-1}v$ and $\eta =  u - \sqrt{-1}v$. 
By Remark \ref{syzygy}(1) we have $(\mu ^{\sharp} )^2 = -(f + u^2)$ (resp. $(\mu ^{\sharp \sharp} )^2 = -(f + u^2 + v^2)$), and $\mu ^{\sharp}$ (resp. $\mu ^{\sharp \sharp}$) corresponds to a Cohen-Macaulay module over $\Rs$ (resp. $\Rss$), which we denote by $M^{\sharp}$ (resp. $M^{\sharp \sharp}$).
Note that $(M^{\sharp} )^{\sharp}$ is not always isomorphic to $M^{\sharp \sharp}$. 

To show our main result, we state and prove a lemma.

\begin{lemma}\label{regular element}
Let $R$ be a commutative noetherian ring and $M,N$ finitely generated $R$-modules. 
If $M$ degenerates to $N$, then $M/xM$ also degenerates to $N/xN$ for an $N$-regular element $x$. 
\end{lemma}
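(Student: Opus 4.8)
The plan is to push the module realizing the degeneration of $M$ through the operation of reducing modulo $x$. By Definition~\ref{degeneration} there is a discrete valuation $k$-algebra $(V,tV,k)$ and a finitely generated $R\otimes_k V$-module $Q$ that is flat over $V$ and satisfies $Q/tQ\cong N$ and $Q_t\cong M\otimes_k V_t$. Regarding $x$ as the element $x\otimes1$ of $R\otimes_k V$, I would set $Q'=Q/xQ$, which is again a finitely generated $R\otimes_k V$-module, and argue that $Q'$ witnesses the degeneration of $M/xM$ to $N/xN$ along the same $V$.

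Two of the three defining conditions are formal, and I would dispose of them first. Since reduction modulo $t$ and reduction modulo $x$ commute, $Q'/tQ'=Q/(xQ+tQ)\cong (Q/tQ)/x(Q/tQ)\cong N/xN$ as $R$-modules. Since localization at $t$ is exact and the isomorphism $Q_t\cong M\otimes_k V_t$ is $R\otimes_k V_t$-linear, hence compatible with multiplication by $x$, we obtain $Q'_t\cong Q_t/xQ_t\cong (M\otimes_k V_t)/x(M\otimes_k V_t)\cong (M/xM)\otimes_k V_t$ as $R\otimes_k V_t$-modules. Everything then reduces to the remaining requirement that $Q'$ be flat as a $V$-module.

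The flatness over $V$ is the heart of the matter, and I expect it to be the only nontrivial step. Because $V$ is a discrete valuation ring, a $V$-module is flat if and only if it is torsion-free, which here is equivalent to $t$ being a nonzerodivisor; so I would verify that $t$ is $Q'$-regular directly. Suppose $tq\in xQ$, say $tq=xq'$ with $q,q'\in Q$. Then $xq'\in tQ$, so $x$ annihilates the image of $q'$ in $Q/tQ\cong N$; as $x$ is $N$-regular this image vanishes, giving $q'=tq''$ for some $q''\in Q$. Now $t(q-xq'')=tq-xtq''=0$, and since $Q$ is $V$-flat the element $t$ is $Q$-regular, so $q=xq''\in xQ$. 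Hence $t$ is $Q'$-regular and $Q'$ is $V$-flat. This is precisely where both hypotheses enter: the $N$-regularity of $x$ and, via the flatness of $Q$, the $t$-regularity of $Q$. The remainder of the argument is just assembling these facts into the definition of degeneration.
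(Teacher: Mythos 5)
Your proof is correct, but it takes a genuinely different route from the paper's. The paper invokes the Riedtmann--Zwara type characterization recalled in Remark \ref{remark of degenerations 1}(1): from a short exact sequence $0 \to Z \to M\oplus Z\to N\to 0$ with nilpotent endomorphism $g$ of $Z$, it applies $-\otimes_R R/xR$, using the $N$-regularity of $x$ to guarantee $\operatorname{Tor}_1^R(N,R/xR)=0$ so that exactness is preserved, and observes that the induced endomorphism of $Z/xZ$ is still nilpotent. You instead work directly with Definition \ref{degeneration}: you reduce the witnessing module $Q$ modulo $x$ and verify the three defining conditions for $Q/xQ$, the only substantive one being $V$-flatness, which you correctly reduce (since $V$ is a discrete valuation ring) to $t$-regularity on $Q/xQ$ and establish by a clean chase that uses exactly the two available hypotheses, namely that $x$ is $N$-regular and that $t$ is $Q$-regular by flatness of $Q$ over $V$. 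Both arguments are complete. The paper's is shorter but leans on the nontrivial equivalence between the scheme-theoretic definition and the exact-sequence characterization (in both directions); yours is self-contained from the definition, stays along the same $V$, and produces an explicit module $Q/xQ$ realizing the degeneration of $M/xM$ to $N/xN$.
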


\begin{proof}
Suppose that $M$ degenerates to $N$ along $V$. 
As mentioned in Remark \ref{remark of degenerations 1}(1), we have an exact sequence
$
0 \to Z \to M \oplus Z \to N \to 0,   
$
where the endomorphism $g$ of $Z$ is nilpotent. 
Since $x$ is $N$-regular, we also have
$
0 \to Z/xZ \to M/xM \oplus Z/xZ \to N/xN \to 0.
$
The endomorphism $g \otimes R/xR$ is also nilpotent, so that $M/xM$ degenerates to $N/xN$ along $V$.  
\end{proof}

The following is the main result of this section.

\begin{proposition}\label{Knorrer of mr}
Let $R = S[\![y]\!]/(y^2 + f )$ be a hypersurface, and let $M,N$ be Cohen-Macaulay $R$-modules.
Consider the following six conditions.

\smallskip

\noindent
\begin{tabular}{lll}
{\rm(1)} $M$ degenerates to $N$.
&{\rm(2)} $M^{\sharp \sharp}$ degenerates to $N^{\sharp \sharp}$.
&{\rm(3)} $M^{\sharp}$ degenerates to $N^{\sharp}$.
\\
{\rm(1')} $M$ degenerates to $\Omega N$.\qquad
&{\rm(2')} $M^{\sharp \sharp}$ degenerates to $(\Omega N )^{\sharp \sharp}$.
&{\rm(4)} $M \oplus \Omega M $ degenerates to $N \oplus \Omega N$.
\end{tabular}

\smallskip

\noindent
Then the implications {\rm(1)} $\Rightarrow$ {\rm(2)} $\Rightarrow$ {\rm(3)} $\Rightarrow$ {\rm(4)} and {\rm(1')} $\Rightarrow$ {\rm(2')} $\Rightarrow$ {\rm(3)} $\Rightarrow$ {\rm(4)} hold true.
\end{proposition}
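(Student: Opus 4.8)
The plan is to descend through the three levels $\Rss\to\Rs\to R$ by successively killing the regular elements $v$ and $u$, after first lifting the given degenerations upward through the $\sharp\sharp$-operation on matrix representations. I would begin by recording three identifications, all read off from matrix representations via Remark \ref{syzygy}. Write $\mu,\nu$ for the matrix representations of $M,N$ over $S$, and recall $\zeta\equiv u\equiv\eta\pmod v$. Reducing $\mu^{\sharp\sharp}=\left(\begin{smallmatrix}\mu&\zeta\\-\eta&-\mu\end{smallmatrix}\right)$ modulo $v$ gives $\left(\begin{smallmatrix}\mu&u\\-u&-\mu\end{smallmatrix}\right)=\mu^{\sharp}$ over $\Ss=\Sss/(v)$, whence $M^{\sharp\sharp}/vM^{\sharp\sharp}\cong M^{\sharp}$ and likewise $N^{\sharp\sharp}/vN^{\sharp\sharp}\cong N^{\sharp}$; reducing $\mu^{\sharp}$ modulo $u$ gives $\left(\begin{smallmatrix}\mu&0\\0&-\mu\end{smallmatrix}\right)$, and since $-\mu$ represents $\Omega M$ (Remark \ref{syzygy}(2)) we obtain $M^{\sharp}/uM^{\sharp}\cong M\oplus\Omega M$ and $N^{\sharp}/uN^{\sharp}\cong N\oplus\Omega N$. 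Finally, $M^{\sharp\sharp}$ (resp. $M^{\sharp}$) is free over $\Sss=S[\![u,v]\!]$ (resp. $\Ss=S[\![u]\!]$), so $v$ (resp. $u$) is a nonzerodivisor on it, and the same holds for the $N$- and $\Omega N$-versions.

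For the upward steps {\rm(1)}$\Rightarrow${\rm(2)} and {\rm(1')}$\Rightarrow${\rm(2')}, I would use Corollary \ref{degeneration via MR}. A degeneration of $M$ to $N$ supplies a matrix representation $\xi$ over $S\otimes_kV$ with $\xi^2=-f$ (as $\xi$ is the action of $y$ on a module over $(S\otimes_kV)[\![y]\!]/(y^2+f)$), $\xi\otimes_VV/tV\cong\nu$, and $\xi\otimes_VV_t\cong\mu\otimes_kV_t$. I then set $\xi^{\sharp\sharp}=\left(\begin{smallmatrix}\xi&\zeta\\-\eta&-\xi\end{smallmatrix}\right)$ over $\Sss\otimes_kV$. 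Because $\zeta,\eta$ are scalars they commute with $\xi$, so $(\xi^{\sharp\sharp})^2=-(f+u^2+v^2)$ and $\xi^{\sharp\sharp}$ is a genuine matrix representation over $\Rss\otimes_kV$; moreover any intertwiner $P$ for $\xi$ lifts to $\left(\begin{smallmatrix}P&0\\0&P\end{smallmatrix}\right)$, which intertwines the $\sharp\sharp$-matrices since it fixes the scalar entries $\zeta,\eta$. Hence $\xi^{\sharp\sharp}\otimes_VV/tV\cong\nu^{\sharp\sharp}$ and $\xi^{\sharp\sharp}\otimes_VV_t\cong\mu^{\sharp\sharp}\otimes_kV_t$, and Corollary \ref{degeneration via MR} yields {\rm(2)}. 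Running the identical argument with $N$ replaced by $\Omega N$ gives {\rm(1')}$\Rightarrow${\rm(2')}.

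For the downward steps I would invoke Lemma \ref{regular element}. Applied to the $N^{\sharp\sharp}$-regular element $v$, it turns {\rm(2)} into a degeneration of $M^{\sharp\sharp}/vM^{\sharp\sharp}\cong M^{\sharp}$ to $N^{\sharp\sharp}/vN^{\sharp\sharp}\cong N^{\sharp}$, which is {\rm(3)}; applied to the $N^{\sharp}$-regular element $u$, it turns {\rm(3)} into a degeneration of $M\oplus\Omega M$ to $N\oplus\Omega N$, which is {\rm(4)}. The only extra point is {\rm(2')}$\Rightarrow${\rm(3)}: reducing {\rm(2')} modulo the $(\Omega N)^{\sharp\sharp}$-regular element $v$ gives a degeneration of $M^{\sharp}$ to $(\Omega N)^{\sharp\sharp}/v(\Omega N)^{\sharp\sharp}$, whose representation is $\left(\begin{smallmatrix}-\nu&u\\-u&\nu\end{smallmatrix}\right)$. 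Conjugating this by $\left(\begin{smallmatrix}0&1\\1&0\end{smallmatrix}\right)$ produces $\left(\begin{smallmatrix}\nu&-u\\u&-\nu\end{smallmatrix}\right)$, which is exactly $\nu^{\sharp}$ conjugated by $\left(\begin{smallmatrix}1&0\\0&-1\end{smallmatrix}\right)$; hence $(\Omega N)^{\sharp\sharp}/v(\Omega N)^{\sharp\sharp}\cong N^{\sharp}$ and we again land at {\rm(3)}.

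The main obstacle is the bookkeeping in the upward step: one must confirm that applying $\sharp\sharp$ to the degeneration datum $\xi$ again yields an admissible matrix representation over $\Sss\otimes_kV$ — that is, $(\xi^{\sharp\sharp})^2=-(f+u^2+v^2)$ and that the two specializations agree with $\nu^{\sharp\sharp}$ and $\mu^{\sharp\sharp}\otimes_kV_t$ — so that Corollary \ref{degeneration via MR} genuinely applies over $\Rss$; the commutativity of the scalar entries $\zeta,\eta$ with every intertwiner is precisely what makes the specializations match. The remaining delicate point is the conjugation identity $(\Omega N)^{\sharp\sharp}/v(\Omega N)^{\sharp\sharp}\cong N^{\sharp}$ used in {\rm(2')}$\Rightarrow${\rm(3)}, which is short but not entirely transparent.
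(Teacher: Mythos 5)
Your proposal is correct and follows essentially the same route as the paper: lift the degeneration datum $\xi$ to $\xi^{\sharp\sharp}$ and check via intertwiners of the form $\left(\begin{smallmatrix}P&0\\0&P\end{smallmatrix}\right)$ that Corollary \ref{degeneration via MR} applies over $\Rss$, then descend by applying Lemma \ref{regular element} to the regular elements $v$ and $u$, using the identifications $M^{\sharp\sharp}/vM^{\sharp\sharp}\cong M^{\sharp}$, $M^{\sharp}/uM^{\sharp}\cong M\oplus\Omega M$, and the conjugation $(\Omega N)^{\sharp\sharp}/v(\Omega N)^{\sharp\sharp}\cong N^{\sharp}$. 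The only cosmetic difference is the choice of conjugating matrix in the last identification, which is immaterial.
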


\begin{proof}
$(1) \Rightarrow (2)$: 
Assume that $M$ degenerates to $N$ along $V$.
Let $\mu$ and $\nu$ be the matrix representations of $M$ and $N$ over $S$. 
We have the matrix representation $\xi$ over $S \otimes _k V$ such that $\xi \otimes_V V_t \cong \mu \otimes _k V_t$ and $\xi \otimes _V V/tV \cong \nu$. 
Now consider the matrix
$
\xi ^{\sharp \sharp} = \left( \begin{smallmatrix} \xi & \zeta \\ -\eta & -\xi \end{smallmatrix}\right)
$, where $\zeta = u + \sqrt{-1}v$ and $\eta =  u - \sqrt{-1}v$.
Then one can show that $\xi ^{\sharp \sharp} \otimes_V V_t \cong \mu^{\sharp \sharp}$ and $\xi ^{\sharp \sharp} \otimes_V V/tV \cong \nu^{\sharp \sharp}$. 
In fact, since $\alpha^{-1} (\xi \otimes_V V_t ) \alpha = \mu$ and $\beta^{-1} (\xi \otimes _V V/tV) \beta = \nu$ respectively for some suitable invertible matrices $\alpha$ and $\beta$, we have
$$
\begin{array}{ll}
\left(\begin{smallmatrix}
\alpha^{-1} & 0 \\
0 & \alpha^{-1}
\end{smallmatrix}\right)
( \xi^{\sharp \sharp} \otimes_V V_t )
\left( \begin{smallmatrix}
\alpha & 0 \\
0 & \alpha
\end{smallmatrix}
\right) 
&= 
\left( \begin{smallmatrix}\alpha^{-1}( \xi  \otimes_V V_t  )\alpha&\alpha^{-1} (\zeta   \otimes_V V_t )\alpha \\\alpha^{-1} (-\eta \otimes_V V_t ) \alpha &\alpha^{-1}( -\xi  \otimes_V V_t )\alpha \end{smallmatrix}\right)= \left( \begin{smallmatrix}\mu &\zeta \\ -\eta  & -\mu \end{smallmatrix}\right)= \mu ^{\sharp \sharp}.
\end{array}
$$ 
Similarly, 
$$
\begin{array}{ll}
\left( \begin{smallmatrix}
\beta^{-1} & 0 \\
0 & \beta^{-1}
\end{smallmatrix}
\right)
( \xi^{\sharp \sharp} \otimes_V V/tV )
\left( \begin{smallmatrix}
\beta & 0 \\
0 & \beta
\end{smallmatrix}
\right) 
&= 
\left( \begin{smallmatrix}\beta ^{-1}( \xi  \otimes_V V/tV  )\beta &\beta ^{-1} (\zeta \otimes_V V/tV )\beta \\ \beta ^{-1} (-\eta \otimes_V V/tV ) \beta &\beta^{-1}( -\xi  \otimes_V V/tV )\beta \end{smallmatrix} \right)=\left( \begin{smallmatrix}\nu &\zeta \\ -\eta  & -\nu \end{smallmatrix}\right)=\nu ^{\sharp \sharp}.
\end{array}
$$ 
According to Corollary \ref{degeneration via MR}, we conclude that $M^{\sharp \sharp}$ degenerates to $N^{\sharp \sharp}$.   

$(1') \Rightarrow (2')$: The similar argument to the proof of $(1)\Rightarrow(2)$ shows this. 

$(2) \Rightarrow (3)$: By Lemma \ref{regular element}, we see that $M^{\sharp \sharp}/ v M^{\sharp \sharp}$ degenerates to $N^{\sharp \sharp}/ v N^{\sharp \sharp}$. 
Clearly, $M^{\sharp \sharp}/ v M^{\sharp \sharp} \cong M^{\sharp}$ and $N^{\sharp \sharp}/ v N^{\sharp \sharp} \cong N^{\sharp}$. 
Thus we obtain the assertion. 

$(2') \Rightarrow (3)$: We also have the degeneration from $M^{\sharp \sharp}/ v M^{\sharp \sharp} \cong M^{\sharp}$ to $(\Omega N )^{\sharp \sharp} / v (\Omega N )^{\sharp \sharp}$ by Lemma \ref{regular element}. 
We note that $(\Omega N )^{\sharp \sharp} / v (\Omega N )^{\sharp \sharp} \cong N^{\sharp}$. 
Actually, for the matrix representation $\left ( \begin{smallmatrix} -\nu & u \\ -u & \nu \end{smallmatrix}\right)$ of $(\Omega N )^{\sharp \sharp} / v (\Omega N )^{\sharp \sharp}$, we have   
$
\left( \begin{smallmatrix}  0& -1 \\ 1 & 0 \end{smallmatrix}\right) \left( \begin{smallmatrix} -\nu & u \\ -u & \nu \end{smallmatrix}\right) \left( \begin{smallmatrix} 0 & 1 \\ -1 & 0 \end{smallmatrix}\right) = \left( \begin{smallmatrix} \nu & u \\ -u & -\nu \end{smallmatrix}\right). 
$
Thus, the implication holds. 

$(3) \Rightarrow (4)$: It follows from Lemma \ref{regular element} since $M^{\sharp}/ u M^{\sharp} \cong M \oplus \Omega M$ and $N^{\sharp}/ u N^{\sharp} \cong N \oplus \Omega N$.
\end{proof}

\begin{remark}
In Proposition \ref{Knorrer of mr}, (4) does not necessarily imply (1) or (1').
Let $R = k[\![x, y, z]\!]/(x^3 + y^2 + z^2)$ and $\p$, $\q$, $\r$ be ideals $(x, y)$, $(x, y^2)$, $(x, y^3)$ of $R$ respectively. 
Then we have the exact sequence $0 \to \q \to \p \oplus \r \to \q  \to 0$, so that $\p \oplus \r$ degenerates to $\q \oplus \q$. 
Notice that $\r \cong \Omega \p$ and $\q \cong \Omega \q$. 
However $\q$ never degenerates to $\p$ or $\q$. 
See \cite[Theorem 3.1]{HY13} for the details. 
\end{remark}

Recall that Kn\"{o}rrer's periodicity theorem \cite[Theorem 12.10]{Y04} gives an equivalence
$
\Phi :\SCM (R) \to \SCM (\Rss )
$
of triangulated categories.
We call this functor a {\em Kn\"{o}rrer's periodicity functor}.
Using Proposition \ref{Knorrer of mr}, we generalize the `if' part of Theorem \ref{odd dim}. 

\begin{example}\label{14}
Consider the hypersurface singularity of type $(A_\infty)$ with odd dimension:
$$
k[\![x_0,x_1,\dots,x_n]\!]/(x_1 ^2+x_2 ^2 +\cdots+x_n ^2 ).
$$
Let $M(h)$ be the image of $(x_0,z^h)$ by the composition of Kn\"{o}rrer's periodicity functors
$
\SCM (k[\![x_0,x_1]\!]/(x_1^2))\to\SCM (k[\![x_0,x_1,x_2,x_3]\!]/(x_1^2+x_2 ^2 + x_3 ^2))\to\cdots\to\SCM (k[\![x_0,x_1,\dots,x_n]\!]/(x_1 ^2+x_2 ^2 +\cdots+x_n ^2 ))$.
Iterated application of Proposition \ref{Knorrer of mr} shows that $M(j)$ degenerates to $M(2i-j)$.
\end{example}

\begin{proposition}
Let $R=S/(f)$ with $f \in \m _S$ and let $\alpha:S^n\to S^n$ be an endomorphism such that $(\alpha,\alpha)$ is a matrix factorization of $f$ over $S$.
Let $z\in R$ be a non-zerodivisor, and consider the map $\left(\begin{smallmatrix}\alpha&z^h\end{smallmatrix}\right):R^n\oplus R^n\to R^n$ for each $h\ge0$. 
Set $\zeta = u + \sqrt{-1}v$ and $\eta =  u - \sqrt{-1}v$. 
Then the following hold.
\begin{enumerate}[\rm(a)]
\item 
The images $\Im \left(\begin{smallmatrix}\alpha&z^h\end{smallmatrix}\right)$ and $\Im \left(\begin{smallmatrix}\alpha&\zeta &z^h&0\\ \eta&-\alpha&0&z^h\end{smallmatrix}\right)$ are Cohen-Macaulay modules over $R$ and $\Rss$, respectively.
\item
The functor $\Phi$ sends $\Im \left(\begin{smallmatrix}\alpha&z^h\end{smallmatrix}\right)$ to $\Im \left(\begin{smallmatrix}\alpha&\zeta &z^h&0\\ \eta&-\alpha&0&z^h\end{smallmatrix}\right)$.
\item
The $R$-module $\Im \left(\begin{smallmatrix}\alpha&z^h\end{smallmatrix}\right)$ is nonfree and indecomposable if and only if so is the $\Rss$-module $\Im \left(\begin{smallmatrix}\alpha&\zeta &z^h&0\\ \eta &-\alpha&0&z^h\end{smallmatrix}\right)$.
\item
For all $i\ge j\ge0$ one can show that $\Im {\left(\begin{smallmatrix}\alpha&z^j\end{smallmatrix}\right)}$ degenerates to $ \Im {\left(\begin{smallmatrix}\alpha&z^{2i-j}\end{smallmatrix}\right)}$ and $\Im {\left(\begin{smallmatrix} \alpha&\zeta &z^j&0\\ \eta &-\alpha&0&z^j \end{smallmatrix}\right)}$ degenerates to $\Im {\left(\begin{smallmatrix}\alpha&\zeta &z^{2i-j}&0\\ \eta &-\alpha&0&z^{2i-j}\end{smallmatrix}\right)}$ as Cohen-Macaulay modules over $R$, $\Rss$ respectively.
\end{enumerate}
\end{proposition}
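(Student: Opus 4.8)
The plan is to recognize all four assertions as consequences of the construction of Section \ref{construction} together with Knörrer's periodicity, once we observe that both modules in question have the shape $\beta(L)+z^hL$ for a square-zero endomorphism $\beta$ with $\Im\beta=\Ker\beta$. So the first thing I would record are the two ``building blocks''. Since $(\alpha,\alpha)$ is a matrix factorization of $f$ over $S$, we have $\alpha^2=f\cdot\mathrm{id}$, hence over $R=S/(f)$ the reduction of $\alpha$ satisfies $\alpha^2=0$ and fits into an exact sequence $\cdots\xrightarrow{\alpha}R^n\xrightarrow{\alpha}R^n\xrightarrow{\alpha}\cdots$; in particular $\Im\alpha=\Ker\alpha$. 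Next, setting $\beta=\left(\begin{smallmatrix}\alpha&\zeta\\\eta&-\alpha\end{smallmatrix}\right)$ on $L'=(\Rss)^{2n}$, a direct block computation gives $\beta^2=(\alpha^2+\zeta\eta)\cdot\mathrm{id}=(f+u^2+v^2)\cdot\mathrm{id}$, using $\zeta\eta=u^2+v^2$ and that the scalars $\zeta,\eta$ commute with $\alpha$. Thus $(\beta,\beta)$ is a matrix factorization of $f+u^2+v^2$ over $S[\![u,v]\!]$, so over $\Rss$ again $\beta^2=0$ and $\Im\beta=\Ker\beta$. Consequently $\Im\left(\begin{smallmatrix}\alpha&z^h\end{smallmatrix}\right)=\alpha(R^n)+z^hR^n$ and $\Im\left(\begin{smallmatrix}\alpha&\zeta&z^h&0\\\eta&-\alpha&0&z^h\end{smallmatrix}\right)=\beta(L')+z^hL'$ are exactly the modules to which Corollary \ref{10} applies. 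I also need $z$ to remain a non-zerodivisor on $\Rss$: lifting $z$ to $S$ it is nonzero (being $R$-regular), hence $S$-regular as $S$ is a domain, so $\Rss/z\Rss=(S/zS)[\![u,v]\!]/(\bar f+u^2+v^2)$ has dimension one less than $\Rss$; as $\Rss$ is Cohen-Macaulay, $z$ is a non-zerodivisor there.

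With these identifications, (a) and (d) are immediate. Applying Corollary \ref{10} over $R$ with $L=R^n$ and over $\Rss$ with $L=L'$, both images are Cohen-Macaulay (since $R^n$ and $L'$ are free), and $\Im\left(\begin{smallmatrix}\alpha&z^j\end{smallmatrix}\right)$ (resp. its $\Rss$-analogue) degenerates to $\Im\left(\begin{smallmatrix}\alpha&z^{2i-j}\end{smallmatrix}\right)$ (resp. its $\Rss$-analogue) for all $i\ge j\ge0$.

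For (b) I would use that Corollary \ref{10} (via Lemma \ref{7}) presents $\Im\left(\begin{smallmatrix}\alpha&z^h\end{smallmatrix}\right)\cong\Coker\left(\begin{smallmatrix}\alpha&-z^h\\0&\alpha\end{smallmatrix}\right)$, with matrix factorization $\left(\left(\begin{smallmatrix}\alpha&-z^h\\0&\alpha\end{smallmatrix}\right),\left(\begin{smallmatrix}\alpha&z^h\\0&\alpha\end{smallmatrix}\right)\right)$ of $f$ over $S$. Feeding this into the explicit Knörrer construction defining $\Phi$ (which inserts the off-diagonal scalars $\zeta,\eta$, with the signs dictated by $u^2+v^2=\zeta\eta$) produces the $4n\times4n$ matrix factorization whose first matrix is $\left(\begin{smallmatrix}\alpha&-z^h&\zeta&0\\0&\alpha&0&\zeta\\\eta&0&-\alpha&-z^h\\0&\eta&0&-\alpha\end{smallmatrix}\right)$. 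A simultaneous permutation of the four $n$-blocks (interchanging the second and third) conjugates this into $\left(\begin{smallmatrix}\beta&-z^hI\\0&\beta\end{smallmatrix}\right)$, which is precisely the presentation of $\Im\left(\begin{smallmatrix}\alpha&\zeta&z^h&0\\\eta&-\alpha&0&z^h\end{smallmatrix}\right)$ furnished by Corollary \ref{10}. Since a simultaneous row/column permutation is a conjugation, the cokernels are isomorphic, giving $\Phi\bigl(\Im\left(\begin{smallmatrix}\alpha&z^h\end{smallmatrix}\right)\bigr)\cong\Im\left(\begin{smallmatrix}\alpha&\zeta&z^h&0\\\eta&-\alpha&0&z^h\end{smallmatrix}\right)$. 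Finally, (c) follows formally: $\Phi$ is an equivalence of triangulated categories, hence carries nonzero objects to nonzero objects (i.e. preserves nonfreeness) and indecomposables to indecomposables; combined with (b) and the standard fact that for nonfree Cohen-Macaulay modules over a complete hypersurface stable indecomposability coincides with module indecomposability, this yields the asserted equivalence.

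I expect the main obstacle to be part (b): one must pin down the precise form of the Knörrer functor on matrix factorizations (including the correct signs arising from $u^2+v^2=\zeta\eta$), check that the block permutation conjugating the two $4n\times4n$ matrices acts simultaneously on rows and columns, and note that \emph{a priori} $\Phi$ only identifies objects up to free summands in the stable category---here the permutation argument happily upgrades this to an honest module isomorphism. A secondary, minor point is the regularity of $z$ over $\Rss$, which the dimension count above settles. (Alternatively, the $\Rss$-degeneration in (d) can be obtained by transporting the $R$-degeneration through $\Phi$, in the spirit of Proposition \ref{Knorrer of mr}, but the direct application of Corollary \ref{10} is cleaner.)
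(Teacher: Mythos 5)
Your proposal is correct and follows essentially the same route as the paper: reduce (a) and (d) to Corollary \ref{10} via $\Im\alpha=\Ker\alpha$ and $\Im\alpha'=\Ker\alpha'$ for $\alpha'=\left(\begin{smallmatrix}\alpha&\zeta\\ \eta&-\alpha\end{smallmatrix}\right)$, prove (b) by applying the explicit Kn\"orrer construction to the cokernel presentation $\Coker\left(\begin{smallmatrix}\alpha&-z^h\\0&\alpha\end{smallmatrix}\right)$ and conjugating the resulting $4n\times 4n$ matrix by a block permutation, and deduce (c) from (b) using that $\Phi$ is an equivalence. Your added checks (that $z$ stays regular on $\Rss$, and that the permutation gives an honest rather than merely stable isomorphism) are correct refinements of details the paper leaves implicit.
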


\begin{proof}
We regard $\alpha$ and $\alpha'=\left(\begin{smallmatrix}
\alpha&\zeta \\
\eta &-\alpha
\end{smallmatrix}\right)$ as the endomorphisms of $R^n$ and ${\Rss}^{2n}$, respectively.
Note that $\Im \alpha=\ker\alpha$ and $\Im \alpha'=\ker\alpha'$.
Assertions (a) and (d) follow from Corollary \ref{10}.
By \cite[(12.8.1)]{Y}, the cokernel of $\left(\begin{smallmatrix}\alpha&-z^h\\0&\alpha\end{smallmatrix}\right)$ is sent by $\Phi$ to that of $\left(\begin{smallmatrix}
\alpha&-z^h&\zeta &0\\
0&\alpha&0&\zeta \\
\eta &0&-\alpha&-z^h\\
0&\eta &0&-\alpha
\end{smallmatrix}\right)$, which is transformed into $\left(\begin{smallmatrix}
\alpha&\zeta &-z^h&0\\
\eta &-\alpha&0&-z^h\\
0&0&\alpha&\zeta \\
0&0&\eta&-\alpha
\end{smallmatrix}\right)$ by elementary row and column operations.
By Lemma \ref{7}, the cokernel of the latter matrix is the image of $\left(\begin{smallmatrix}
\alpha&\zeta &z^h&0\\
\eta &-\alpha&0&z^h
\end{smallmatrix}\right)$.
Thus (b) follows.
As $\Phi$ is an equivalence of additive categories, (c) follows from (b).
\end{proof}

\begin{remark}
(1) Since, for each $h>0$, the ideal $(x_0,z^h)$ of the ring $k[\![x_0,z]\!]/(x_0^2)$ is nonfree and indecomposable and coincides with $\Im \left(\begin{smallmatrix}x_0&z^h\end{smallmatrix}\right)$,  Example \ref{14} can also be obtained from the above proposition.
\\
(2) Let $R$ be an algebra over a field.
Let $0\to L\to M\to N\to0$ be an exact sequence of finitely generated $R$-modules.
Then $M$ degenerates to $L\oplus N$ by Remark \ref{remark of degenerations 1}(2).
Such a degeneration is called a {\em degeneration by an extension} in \cite[Definition 2.4]{HY13}.
Thus, it is important to investigate the existence of degenerations that cannot be obtained by extensions.
The degeneration in Example \ref{14} cannot be obtained by (iterated) extensions, because $M(2i-j)$ is an indecomposable $R$-module.
\end{remark}

\end{document}